\numberwithin{equation}{section}
\newtheorem{theorem}{Theorem}[section]
\newtheorem{definition}[theorem]{Definition}
\newtheorem{lemma}[theorem]{Lemma}
\newtheorem{remark}[theorem]{Remark}
\newtheorem{prop}[theorem]{Proposition}
\newtheorem{example}[theorem]{Example}
\newtheorem{conjecture}[theorem]{Conjecture}
\newcommand{\CG}{\mathcal G}
\newcommand{\CH}{\mathcal H}
\newcommand{\RR}{\mathbb{R}}
\newcommand{\CC}{\mathbb{C}}
\newcommand{\ZZ}{\mathbb{Z}}
\newcommand{\WT}[1]{\widetilde{#1}}
\renewenvironment{proof}[1][\proofname. ]{ { \noindent \it #1}}{\qed \\}
\begin{document}
\author[Cho]{Cheol-Hyun Cho}
\address{Department of Mathematical Sciences, Research institute of Mathematics\\ Seoul National University\\ San 56-1, 
Shinrimdong\\ Gwanakgu \\Seoul 47907\\ Korea}
\email{chocheol@snu.ac.kr}
\author[Hong]{Hansol Hong}
\address{Department of Mathematical Sciences \\ Seoul National University\\ San 56-1, 
Shinrimdong\\ Gwanakgu \\Seoul 47907\\ Korea}
\email{hansol84@snu.ac.kr}
\author[Shin]{Hyung-Seok Shin}
\address{Department of Mathematical Sciences \\ Seoul National University\\ San 56-1, 
Shinrimdong\\ Gwanakgu \\Seoul 47907\\ Korea}
\email{shs83@snu.ac.kr}
\title{On Orbifold embeddings}
\begin{abstract}
For an orbifold, there is a notion of an orbifold embedding, which is more general than the one of sub-orbifolds. 
We develop several properties of orbifold embeddings. In the case of translation groupoids, we show that
such a notion is equivalent to a strong equivariant immersion.
\end{abstract}
\subjclass{57R18}
\keywords{orbifold, groupoid, equivariant immersion}
\maketitle

\tableofcontents

\section{Introduction}
Orbifolds arise naturally in many areas such as topology, geometric group theory, symplectic geometry and so on. 
In the last decade, they have been actively studied after Chen and Ruan introduced a new cohomology
ring structure on orbifold cohomology \cite{CR}.  Orbifolds also naturally appears when there is a symmetry, such
as in symplectic reductions or in the presence of group actions. A very natural and basic question is to find the sub-objects for a given orbifold.
A suborbifold (a subset which is also an orbifold with the induced topology) turns out to be a very restrictive
notion. For example, given a product of two orbifolds the correct notion of diagonal $\Delta$(see Definition \ref{orbdiag}) does not
become a suborbifold but it is what is called an {\em orbifold embedding} into the product orbifold. Hence, it is clear that one should enlarge the class of sub-objects of an orbifold by including orbifold embeddings. Although
the definition of orbifold embeddings appear in \cite{ALR}, such a notion has not been studied further.
We became interested in this question of sub-objects in order to consider a proper notion of a Fukaya category of an orbifold.
In the Fukaya category of a symplectic manifold, its objects are Lagrangian submanifolds decorated with additional data, and
we believe that in the case of symplectic orbifolds, Lagrangian orbifold embeddings  should become an important object
in its Fukaya category. (In this paper, we do not consider the Fukaya category of an orbifold, which is left for future research.)

In this paper we give a slightly modified definition of orbifold embedding and  explore several properties of orbifold embeddings.  Also we prove that given an abelian orbifold embedding, the induced map between inertia orbifolds again becomes an orbifold embedding (Theorem \ref{thminertia}).

One drawback of the definition of orbifold embedding is that it is rather cumbersome to work with as it is defined using local data. Also we observe that this notion of orbifold embedding is not Morita invariant. The second half of this paper is to remedy this in the case of translation groupoids.
Namely, we construct an equivariant immersion in sections 7 and 8, from the data of an orbifold embedding to an orbifold groupoid which is Morita equivalent to a translation groupoid $[M/G]$.  Equivariant immersions are much easier to work with than
orbifold embeddings, hence this construction should be very useful in applications. Pronk and Scull \cite{PS} showed that for translation groupoids, Morita equivalence can be defined only using translation groupoids, and our result is also in a similar point of view. 
To construct such an equivariant immersion, we use the Hilsum-Scandalis map, which is reviewed in section 6.

Not all equivariant immersions give rise to orbifold embeddings, and we define what we call a {\em strong } equivariant
immersion, which is shown to give  an orbifold embedding. Also the equivariant immersion obtained  from orbifold
embeddings are also strong. Hence, in the case of translation groupoids, one can work with
strong equivariant immersions, instead of orbifold embeddings.  

\section*{Acknowledgments}
We would like to thank  anonymous referee for helpful comments in improving the exposition of the paper.
This work was supported by the National Research Foundation of Korea (NRF) grant funded by the Korea Government (MEST) (No. 2012R1A1A2003117)

\section{Orbifold groupoids}
In this section, we briefly recall well-known notions related to orbifold groupoids. We refer
readers to \cite{MP} or \cite{ALR} for details.
One can define orbifolds in terms of  local uniformizing charts (due to Satake).
\begin{definition}
An orbifold is a Hausdorff, second countable topological space $X$ with a collection of uniformizing charts $(V_\alpha, G_\alpha, \phi_\alpha : V_\alpha \to X)$ of $X$, where the finite group $G_\alpha$ acts effectively on the manifold $V_\alpha$, and continous maps $\phi_\alpha$ which descend to a homeomorphism $\overline{\phi_\alpha} $ of $V_\alpha / G_\alpha$  onto an open subset $U_\alpha \subset X$. This data is required to satisfy following conditions:
\begin{enumerate}
\item $\{U_\alpha \}$ is a covering of $X$.
\item (Local compatibility) For $x \in U_\alpha \cap U_\beta$, there exist an open neighborhood $U \subset U_\alpha \cap U_\beta$ of $x$ and a chart $(V, G, \phi : V \to X)$ of $U$ which embeds to $(V_\alpha, G_\alpha, \phi_\alpha)$ and $(V_\beta, G_\beta, \phi_\beta)$.
\end{enumerate}
\end{definition}

In the modern approach of orbifolds, one usually uses the language of groupoids in the definition of orbifolds. This generalizes the notion of classical orbifolds allowing noneffective orbifolds. Recall that a groupoid is a (small) category whose morphisms are all invertible. Giving a topological structure and smooth structure on groupoids, we get the notion of Lie groupoids. 

\begin{definition}
A topological groupoid $\CG$ is a pair of topological spaces $G_0 := Obj(\CG)$ and $G_1 := Mor(\CG)$ together with continuous structure maps:
\begin{enumerate}
\item The source and target map $s, t : G_1 \rightrightarrows G_0$, which assigns to each arrow $g \in G_1$ its source object and target object, respectively.
\item The multiplication map $m : G_1 {}_{s}\times_{t} G_1 \to G_1$, which compose two arrows.
\item The unit map $u : G_0 \to G_1$, which is a two-sided unit for the multiplication.
\item The inverse $i : G_1 \to G_1$, which assigns to each arrow its inverse arrow. This map is well-defined since all morphisms are invertible.
\end{enumerate}
If all of the above maps are smooth and $s$ (or $t$) is a surjective submersion(so that the domain $G_1 {}_s \times_t G_1$ of $m$ is a smooth manifold), then $\CG$ is called a Lie groupoid.
\end{definition}

\begin{definition}
Let $\CG$ be a Lie groupoid.
\begin{enumerate}
\item $\CG$ is proper if $(s, t) : G_1 \to G_0 \times G_0$ is a proper map.
\item $\CG$ is called a foliation groupoid if each isotropy group $G_x$ is discrete.
\item $\CG$ is \`etale if $s$ and $t$ are local diffeomorphisms.
\end{enumerate}
\end{definition}
Note the every \`etale groupoid is a foliation groupoid. It can be easily checked that a proper foliation groupoid $\CG$ has only finite isotropy groups $G_x := (s, t)^{-1}(x, x)$ for each $x \in G_0$

\begin{definition}
We define an orbifold groupoid to be a proper \`etale Lie groupoid.
\end{definition}

Let us recall morphisms and Morita equivalence of orbifolds.
\begin{definition}
Let $\CG$ and $\CH$ be Lie groupoids. A homomorphism $\phi : \CH \to \CG$ consists of two smooth maps $\phi_0 : H_0 \to G_0$ and $\phi_1 : H_1 \to G_1$, that together commute with all the structure maps for the two groupoids $\CG$ and $\CH$. It means that Lie groupoid morphisms are smooth functors between categories.
\end{definition}
The following notion of equivalence is restrictive (it does not define equivalence relation), and later we will recall Morita equivalence which is indeed the correct notion of equivalences between orbifold groupoids.
\begin{definition}\label{equivalence}
A homomorphism between $\phi : \mathcal{H} \to \mathcal{G}$ between Lie groupoids is called equivalence if 
\begin{enumerate}
\item[(i)] (essentially surjective)
the map 
$$t \pi_1 : G_1 \,_s \! \times_{\phi_0} H_0 \to G_0$$
defined on the fibered product of manifolds 
$$\{(g,y) \, | \, g \in G_1, y \in H_0, s(g) = \phi(y) \}$$ 
is a surjective submersion where $\pi_1 : G_1 \,_s \! \times_{\phi_0} H_0 \to G_1$ is the projections to the first factor;
\item[(ii)]the square
\begin{equation*}
\begin{diagram}
\node{ H_1} \arrow{s,r}{(s,t)}\arrow{e,t}{\phi_1} \node{G_1} \arrow{s,r}{(s,t)} \\
\node{ H_0 \times H_0}\arrow{e,t}{\phi_0 \times \phi_0} \node{G_0 \times G_0}
\end{diagram}
\end{equation*}
is a fibered product of manifolds.
\end{enumerate}
\end{definition}

An equivalence in the Definition \ref{equivalence}
may not have an inverse. The notion of Morita equivalence is obtained by formally inverting equivalences in Definition \ref{equivalence}.
 
\begin{definition}\label{def:Mor}
$\mathcal{G}$ and $\mathcal{G}'$ are said to be Morita equivalent if there exists a groupoid $\mathcal{H}$ and two equivalences
$$\mathcal{G} \stackrel{\phi}{\longleftarrow} \mathcal{H} \stackrel{\phi'}{\longrightarrow} \mathcal{G}.$$
\end{definition}
It is well known that ``Morita equivalence" defines an equivalence relation. (See the discussion below Definition 1.43 in \cite{ALR}.) It is clear from the definition that equivalence is a special case of Morita equivalence.  Lastly, we give an example of Morita equivalent groupoid which are not equivalent. For example, if $\mathcal{G}$ can be made by 
 tearing off some part $\mathcal{G}'$ and adding arrows which contains the original gluing information,
 then we have an equivalence from   $\mathcal{G}$ to  $\mathcal{G}'$.
However, since ``tearing off" process is not continuous, there is no map in the opposite direction in general.  
\begin{example}
Consider two orbifold groupoids,  $\mathcal{G}$ and $\mathcal{G}'$, which are equivalent to the closed interval. From the figure \ref{Moritaex}, it is clear that they are Morita equivalent, but there are no maps neither from $\mathcal{G}$ to $\mathcal{G}'$ nor from $\mathcal{G}'$ to $\mathcal{G}$.
\end{example}
\begin{figure}[h]
\begin{center}
\includegraphics[height=1.3in]{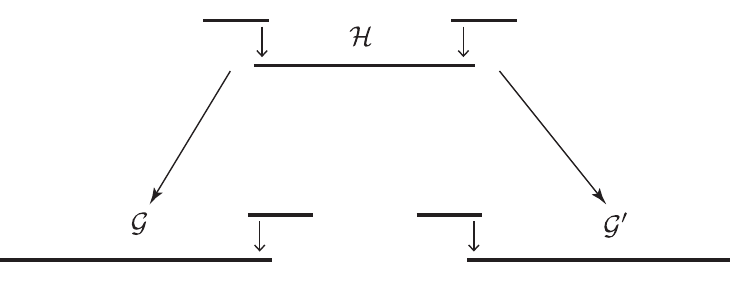}

\label{morita}
\caption{Morita equivalence}\label{Moritaex}
\end{center}
\end{figure}

\begin{definition}\label{fiberprod}
Let $\phi : \CH \to \CG$ and $\psi : \mathcal{K} \to \CG$ be homomorphisms of Lie groupoids. The fibered product $\CH \times_{\CG} \mathcal{K}$ is the Lie groupoid that makes the following diagram a fibered product.
\begin{equation*}
\xymatrix{ \CH \times_\CG \mathcal{K} \ar[dd]^{pr_1} \ar[rr]^{pr_2}&& \mathcal{K} \ar[dd]^{\psi}\\
\\
\CH \ar[rr]^{\phi} && \CG
}
\end{equation*}
which commutes up to a natural transformation. More explicitly,
\begin{eqnarray}\label{fiberform}
(\CH \times_{\CG} \mathcal{K})_0 &:=& H_0 \times_{\phi_0, G_0, s} G_1 \times_{t, G_0, \psi_0} K_0\\
(\CH \times_{\CG} \mathcal{K})_1 &:=& H_1 \times_{s\phi_1, G_0, s} G_1 \times_{t, G_0, s\psi_1} K_1
\end{eqnarray}
with following source and target maps
\begin{align*}
s(h, g, k) &= (s(h), g, s(k)),\\
t(h, g, k) &= (t(h), \psi(k)g\phi(h)^{-1}, t(k)).
\end{align*}
We will also write $\CH \times_\CG \mathcal{K}$ as $\phi^\ast \mathcal{K}$ occasionally.
\end{definition}

To be more precise, an element of $(\CH \times_{\CG} \mathcal{K})_0$ is a triple $(x,g,z)$ such that
\begin{equation}
\xymatrix{ x & \phi_0 (x) \ar[r]^g & \psi_0 (z) & z}
\end{equation}
and a morphism between $(x,g,z)$ and $(x',g',z')$ is a triple $(h,g,k)$ which makes the following diagram commutative.
\begin{equation}
\xymatrix{ x \ar[d]^h & \phi_0 (x) \ar[d]_{\phi_1 (h)} \ar[r]^g & \psi_0 (z) \ar[d]^{\psi_1 (k)} & z \ar[d]^k\\
x' & \phi_0 (x') \ar[r]^{g'} & \psi_0 (z') & z' }
\end{equation}
i.e. for $(h,g,k) \in (\CH \times_{\mathcal{K}} \CG)_1$ which satisfies $s(g) = \phi_0 (s(h))$ and $t(g) =\psi_0 (s(k))$ by definition,
\begin{equation*}
\xymatrix{ s(h,g,k)\,\,\,=\quad s(h) & \phi_0 (s(h)) \ar[r]^g & \psi_0 (s(k)) & s(k)}
\end{equation*}
and
\begin{equation*}
\xymatrix{ t(h,g,k)\,\,\,=\quad t(h) & \phi_0 (t(h)) \ar[r]^{g'} & \psi_0 (t(k)) & t(k)}
\end{equation*}
where $g'=\psi_1 (k) g \phi_1 (h)^{-1}$.

\begin{remark}\label{fiber}
The fibered product $\CH \times_{\CG} \mathcal{K}$ may not be a Lie groupoid, since $(\CH \times_{\CG} \mathcal{K})_0$ or $(\CH \times_{\CG} \mathcal{K})_1$ may not be manifolds.
\end{remark}
The following lemma is well-known.
\begin{lemma}\label{pbequiv}
If $\psi : \mathcal{K} \to \CG$ is an equivalence, then $\CH \times_{\CG} \mathcal{K}$ is a Lie groupoid and the projection $\CH \times_{\CG} \mathcal{K} \to \mathcal{H}$ is an equivalence
\begin{equation}\label{fibdia}
\xymatrix{
\CH \times_{\CG} \mathcal{K} \ar[rr]^{pr_2} \ar[dd]_{pr_1}& & \mathcal{K} \ar[dd]^{\psi : \cong} \\
&&\\
\mathcal{H} \ar[rr]_{\phi}&& \mathcal{G}. }
\end{equation}
\end{lemma}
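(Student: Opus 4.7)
The plan is to use essential surjectivity of $\psi$ to equip $(\CH \times_{\CG} \mathcal{K})_0$ and $(\CH \times_{\CG} \mathcal{K})_1$ with smooth manifold structures, and then to verify conditions (i) and (ii) of Definition \ref{equivalence} for the projection $pr_1$. The set-theoretic content is a routine unwinding of the definitions; the main obstacle, and the point that essential surjectivity of $\psi$ is designed to overcome, is ensuring that every fibered product that appears is actually a manifold (compare Remark \ref{fiber}).

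The submersion $s : G_1 \to G_0$ makes $G_1 \,{}_s\!\times_{\psi_0} K_0$ a manifold; essential surjectivity of $\psi$ further promotes $(g,k) \mapsto t(g)$ on this space to a surjective submersion onto $G_0$. Rewriting the iterated fibered product from Definition \ref{fiberprod} as
\[
(\CH \times_{\CG} \mathcal{K})_0 \;\cong\; H_0 \,{}_{\phi_0}\!\times_{G_0} \bigl(G_1 \,{}_s\!\times_{\psi_0} K_0\bigr),
\]
I conclude this is a smooth manifold, with $pr_1$ a submersion (the pullback of the surjective submersion above along $\phi_0$). The same reasoning, applied one layer deeper using $H_1, G_1, K_1$ and the source/target maps, makes $(\CH \times_{\CG} \mathcal{K})_1$ a manifold; the groupoid structure maps then inherit smoothness.

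For essential surjectivity of $pr_1$, pick $h \in H_0$ and apply essential surjectivity of $\psi$ to find $(g, k) \in G_1 \times K_0$ with $s(g) = \phi_0(h)$ and $t(g) = \psi_0(k)$; then $(h, g, k) \in (\CH \times_{\CG} \mathcal{K})_0$ lies over $h$, and pairing with the identity arrow at $h$ exhibits $h$ in the image of the map $H_1 \,{}_s\!\times_{pr_1}(\CH \times_{\CG} \mathcal{K})_0 \to H_0$, $(h_1, \xi) \mapsto t(h_1)$. The submersion property is then immediate from $pr_1$ being a submersion together with $t : H_1 \to H_0$ being one.

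For condition (ii), I unwind the source and target formulas: a morphism $(\tilde{h}, \tilde{g}, \tilde{k})$ from $(x_0, g_0, k_0)$ to $(x_1, g_1, k_1)$ must satisfy $\tilde{g} = g_0$, $s(\tilde{h}) = x_0$, $t(\tilde{h}) = x_1$, $s(\tilde{k}) = k_0$, $t(\tilde{k}) = k_1$, and $\psi_1(\tilde{k}) = g_1 \phi_1(\tilde{h}) g_0^{-1}$. Given $\tilde{h}$ and the two endpoints, $\tilde{g}$ and $\psi_1(\tilde{k})$ are pinned down; condition (ii) applied to $\psi$ then supplies a unique $\tilde{k}$, smoothly in all parameters. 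Globalising packages this bijection into the diffeomorphism required by condition (ii) for $pr_1$, completing the proof.
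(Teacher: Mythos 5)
Your proposal is correct and follows essentially the same route as the paper's proof: the manifold structures come from pulling back the surjective submersion $G_1 \times_{G_0} K_0 \to G_0$ supplied by essential surjectivity of $\psi$, condition (i) for $pr_1$ is a composition of surjective submersions obtained from the same pullback diagram, and condition (ii) reduces to the unique lifting of the arrow $g_1\phi_1(\tilde h)g_0^{-1}$ to $K_1$ via condition (ii) for $\psi$. The only (harmless) discrepancy is a source/target convention swap in your description of $(\CH\times_{\CG}\mathcal K)_0$, reconciled by the inverse map $i:G_1\to G_1$.
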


\begin{proof}
From \eqref{fiberform}, one can see that if $s\circ pr_1 : G_1 \times_{t, G_0, \psi_0} K_0 \to G_0$ is a submersion, then $(\CH \times_{\CG} \mathcal{K})_0$ is a manifold. This happens when $\psi$ is an equivalence. Since $s:K_1 \to K_0$ is a submersion, a similar argument shows that $(\CH \times_{\CG} \mathcal{K})_1$ is a manifold for the equivalence $\psi$.

Recall that  $\CH \times_{\CG} \mathcal{K}$ is a Lie groupoid  whose set of objects and arrows are
\begin{eqnarray*}
(\CH \times_{\CG} \mathcal{K})_0 &=& H_0 \times_{\phi_0, G_0, s} G_1 \times_{t, G_0, \psi_0} K_0,\\
(\CH \times_{\CG} \mathcal{K})_1 &=& H_1 \times_{s\phi_1, G_0, s} G_1 \times_{t, G_0, s\psi_1} K_1
\end{eqnarray*}
respectively.
We first check the condition (i) of Definition \ref{equivalence}. We have to show that the following map
\begin{equation*}
t\pi_1 : H_1 \times_{s, H_0, pr_1} (H_0 \times_{\phi_0, G_0, s} G_1 \times_{t, G_0, \psi_0} K_0) \to H_0
\end{equation*}
is a surjective submersion where $\pi_1$ is the projection to the first factor $H_1$. Consider the following diagrams of fiber products.
\begin{equation*}
\xymatrix{H_1 \times_{s, H_0,pr_1} (\CH \times_{\CG} \mathcal{K})_0 \ar[r] \ar[d]_{\pi_1} &(\CH \times_{\CG} \mathcal{K})_0  \ar[r] \ar[d] & G_1 \times_{t, G_0, \psi_0} K_0 \ar[d] \\
H_1 \ar[r] & H_0 \ar[r] & G_0 \\
}
\end{equation*}
The rightmost vertical map $G_1 \times_{t, G_0, \psi_0} K_0 \to G_0$ is a surjective submersion, since $\psi : \mathcal{K} \to \mathcal{G}$ is an equivalence. Then, it follows from a general property of fiber product diagrams that the middle vertical map $H_0 \times_{\phi_0, G_0, s} G_1 \times_{t, G_0, \psi_0} K_0 \to H_0$ is also a surjective submersion, and hence so is $\pi_1$. Finally, $t \pi_1$ is a surjective submersion since it is given by a composition of two such kinds of maps.
 
To show the second condition of equivalence, we consider the following diagram
\begin{equation*}
\begin{diagram}
\node{(\CH \times_{\CG} \mathcal{K})_1} \arrow{s,r}{(s,t)} \arrow{e,t}{pr_1} \node{H_1} \arrow{s,r}{(s,t)} \\
\node{(\CH \times_{\CG} \mathcal{K})_0 \times (\CH \times_{\CG} \mathcal{K})_0}\arrow{e,t}{pr_1 \times pr_1} \node{H_0 \times H_0}
\end{diagram}
\end{equation*} 
Since $(s, t) : H_1 \to H_0 \times H_0$ is a submersion, we only need to check that the above diagram  is a fibered product of sets.
Suppose $h \in H_1$, and denote $x = s(h)$ and $x' = t(h)$.
Since $pr_1 : H_0 \times_{\phi_0, G_0, s} G_1 \times_{t, G_0, \psi_0} K_0 \to H_0$ is surjective, there exists $(x, g, y)$ and $(x', g', y')$ in $H_0 \times_{\phi_0, G_0, s} G_1 \times_{t, G_0, \psi_0} K_0$.
Since $\psi$ is equivalence, there exists a unique $k \in K_1$ satisfying $\psi_1 (k) = g' \phi_{1}(h) g^{-1}$. Since $h \in H_1$ determines a unique element $(h, g, k)$ in the fiber over $((x, g, y), (x', g', y'))$, the above diagram is a fiber product as sets.
\end{proof}

\section{Orbifold embeddings}
In this section we recall the main definition of this paper, the one of an orbifold embedding, and explore its properties. The following notion is a slight modification from the one defined by Adem, Leida, and Ruan in their book \cite{ALR}.
\begin{definition}\label{suborbifold}
A homomorphism of orbifold groupoids $\phi : \CH  \to \CG$ is an embedding if the
following conditions are satisfied:
\begin{enumerate}
\item $\phi_0 :H_0 \to G_0$ is an immersion
\item Let $x \in im(\phi_0) \subset G_0$ and let $U_x$ be a neighborhood such that
$\CG|_{U_x} \cong G_x \ltimes U_x$. Then, the $\CH$-action on $\phi_0^{-1}(x)$ is transitive, and there exists an open neighborhood
$V_y \subset H_0$ for each $y \in \phi^{-1}_0(x)$ such that $\CH|_{V_y} \cong H_y \ltimes V_y$
and 
\begin{equation}\label{model}
\CH|_{\phi^{-1}_0(U_x)} \cong G_x \ltimes (G_x  \times_{H_y} V_y)
\end{equation}
\item $|\phi|:|\CH| \to |\CG|$ is proper and injective.
\end{enumerate}

$\CH$ together with $\phi$ is called an orbifold embedding of $G$.
\end{definition}

\begin{figure}[h]
\begin{center}
\includegraphics[height=1.5in]{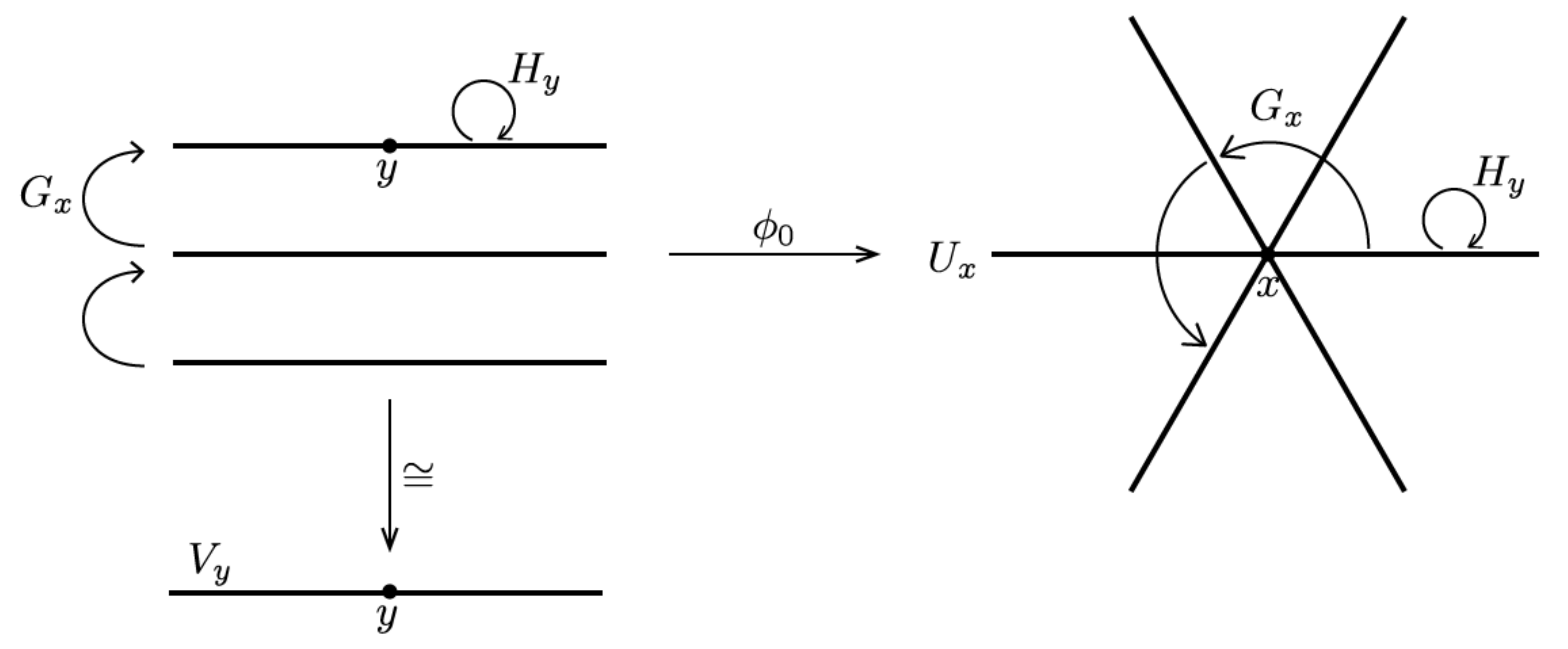}

\label{orbemb}
\caption{Local shape of an orbifold embedding}
\end{center}
\end{figure}
In (2) of Definition \ref{suborbifold} the action of $G_x$ is defined by
$$G_x \times (G_x \times_{H_y} V_y) \to G_x \times_{H_y} V_y,\quad (g,[k,z]) \mapsto [gk,z]$$
where $(k\phi_1(h), z) \sim (k, h\cdot z)$ is the equivalence relation defined by the action of $H_y$ and $[k,z]$ denotes a class in the quotient $G_x \times_{H_y} V_y$. \\

There are two modifications in the definition from that of  Adem, Leida, and Ruan (Definition 2.3 in \cite{ALR}).
\begin{enumerate}
\item We use the local model  $G_x \times_{H_y} V_y$ instead of  $G_x /H_y \times V_y$.
\item We require that $|\phi|:|\CH| \to |\CG|$  is injective (which was not present in \cite{ALR}).
\end{enumerate}
Let us explain why we have made such modifications.

Firstly, in \cite{ALR} $G_x /\phi_1(H_y) \times V_y$ was used instead of $G_x \times_{H_y} V_y$.
But $\phi_1(H_y)$ may not be a normal subgroup of $G_x$ (See the example \ref{ex1}). 
Also, it is not easy to find a natural $G_x$ action on $G_x / \phi_1 (H_y) \times V_y$ which reflects the $H_y$ action on $V_y$. The only plausible action of $G_x$ that may exist on $G_x /\phi_1(H_y)  \times V_y$ is by the left multiplication on the first component. Now, any reasonable definition of an  embedding should include the identity map, and therefore in this case we would have that $G_x \ltimes U_x \cong G_x \ltimes (G_x/G_x \times V_y)$ where $x=y$ and $U_x =V_y$ but, on $U_x$ the group $G_x$ acts and on $G_x / G_x \times V_y$ the action is trivial. Hence, $G_x \times_{H_y} V_y$  in \eqref{model} should be the correct local model.

\begin{example}\label{ex1}
Let $S_3$ act on $\CC^3$ as permutations on three coordinates where $S_3$ is the permutation group on $3$ letters. Consider $V:= \CC \times \CC \times \{0 \} \subset \CC^3$ and the subgroup $H$ of $S_3$ generated by the transposition $(1,2)$. Then, $H$ acts on $V$ and the natural map
$$ S_3 \times_H V \to \CC^3  $$
induces an orbifold embedding $S_3 \ltimes \left( S_3 \times_H V\right) \to S_3  \ltimes \CC^3$. Note that $H$ is not a normal subgroup of $S_3$.
\end{example}

Secondly, in \cite{ALR}, an orbifold embedding $\phi : \mathcal{H} \to \mathcal{G}$ does not necessarily induce an injective map $|\phi| : |\mathcal{H}| \to |\mathcal{G}|$. We first provide an example where $|\phi|$ is not injective but satisfies the other conditions of embedding. We will call a morphism $\phi$ of Lie groupoids {\em essentially injective} if $|\phi|$ is injective.

\begin{example}\label{nonessential}
Let $\mathcal{G}$ be given by $G_0 = \RR \amalg \RR$ and $\ZZ / 2\ZZ$ identifying two copies of $\RR$. Suppose $\mathcal{H}$ is the disjoint union of two copies of $\RR$ with only trivial arrows.

Immerse (embed) $H_0$ to $G_0$ by $id_\RR \amalg id_\RR$. One can easily check that $\phi$ satisfies the other axioms of orbifold embedding, but $|\phi|$ not injective. The induced map between quotient space is rather a covering map from trivial double cover of $\RR$ to $\RR$.
\end{example}



\begin{remark}
A morphism of groupoids $\phi : \mathcal{H} \to \mathcal{G}$ is essentially injective if the $\mathcal{H}$-action on $\phi_0^{-1} (t (s^{-1} (y) ) )$ ($\phi_0$ inverse image of $H_1$-orbit) is transitive for every $y \in G_0$, i.e. if there exists an arrow in $G_1$ from $\phi_0(x)$ to $\phi_0 (x')$, then one can find an arrow in $H_1$ from $x$ to $x'$. 

Compare it with the notion of essential surjectivity: $\phi : \mathcal{H} \to \mathcal{G}$ is called essentially surjective if for any point $x$ in $G_0$, there is an arrow $g : \phi(y) \to x$ from a point in the image of $\phi$ to $x$. 
\end{remark}





\begin{remark}
The essential injectivity is a property which is Morita-invariant since it is a property of the induced map between quotient spaces. 
\end{remark} 

Let us mention why such a notion of orbifold embedding is needed.
First, for orbifolds, one can define suborbifolds as sub Lie groupoids which are orbifold groupoids.
But important objects, such as the diagonal, do {\em not} become suborbifolds. Hence we need a
proper notion to consider such objects, or we need to enlarge the definition of suborbifolds to include orbifold embeddings. See example \ref{orbdiag} to note that the diagonal homomorphism 
is indeed an orbifold embedding. We believe that orbifold embeddings are an important
class of subobjects for an orbifold, but unfortunately, these notions has not been so far used nor
developed further. 

From now on, we develop the properties of orbifold embeddings.
\begin{lemma}\label{localinject}
If $\phi : \CH \to \CG$ is an orbifold embedding, then the restriction of $\phi_1$ on local isotropy groups is injective.
\end{lemma}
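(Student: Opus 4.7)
The plan is to exploit the local model given by condition (2) of Definition \ref{suborbifold}. Fix $y \in H_0$, set $x = \phi_0(y)$, and consider the restriction $\phi_1|_{H_y} : H_y \to G_x$. By the definition of orbifold embedding (possibly shrinking $U_x$), there is a groupoid isomorphism
\[
\CH|_{\phi_0^{-1}(U_x)} \cong G_x \ltimes (G_x \times_{H_y} V_y),
\]
and this isomorphism is compatible with $\phi$ through the natural action map $G_x \times_{H_y} V_y \to U_x$ given by $[g,v] \mapsto g \cdot \phi_0(v)$.

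The key computation I would carry out is to identify the isotropy of the distinguished class $[e,y] \in G_x \times_{H_y} V_y$ in the translation groupoid on the right. Using the relation $(k\phi_1(h), z) \sim (k, h \cdot z)$ with $k = e$ and $z = y$, the condition $g \cdot [e,y] = [e,y]$ reduces to $g = \phi_1(h)$ for some $h \in H_y$, since the constraint $h \cdot y = y$ is automatic from $H_y$ being the $\CH$-isotropy of $y$. Hence $\mathrm{Stab}_{G_x}([e,y]) = \phi_1(H_y)$ as a subgroup of $G_x$.

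Since any groupoid isomorphism restricts to a group isomorphism on isotropy at corresponding points, the local model yields an isomorphism $H_y \xrightarrow{\cong} \phi_1(H_y) \subset G_x$. The compatibility with $\phi$ (built into the embedding axiom) identifies this group isomorphism with $h \mapsto \phi_1(h)$, so $\phi_1|_{H_y}$ factors as a bijection onto $\phi_1(H_y)$ followed by the inclusion into $G_x$, and is in particular injective.

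The only point requiring real care is the compatibility of the local-model isomorphism with the homomorphism $\phi$; this is essentially encoded in the formulation of condition (2), but I would state it explicitly before the isotropy computation. Once this compatibility is in hand, the rest of the argument is an immediate consequence of unwinding the equivalence relation defining $G_x \times_{H_y} V_y$.
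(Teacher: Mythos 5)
Your proposal is correct and follows essentially the same route as the paper: identify $y$ with the class $[e,y]$ in the local model, compute that the $G_x$-stabilizer of $[e,y]$ is $\phi_1(H_y)$, and use the fact that the groupoid isomorphism of condition (2) preserves isotropy groups to conclude $H_y \cong \phi_1(H_y)$, whence $\phi_1|_{H_y}$ is injective. The only difference is that you spell out the stabilizer computation and the compatibility of the local-model isomorphism with $\phi$ explicitly (and note that, alternatively, surjectivity of $H_y \to \phi_1(H_y)$ together with equality of orders of these finite groups already forces injectivity), whereas the paper states these steps in one line.
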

\begin{proof}
Note that the point $y$ corresponds to $[e,y]$ in this model, where $e$ is the identity element in $G_x$. Since equivalence between orbifolds preserves local isotropy groups, the local group $\phi_1 (H_y)$ at $[e, y]$ of $G_x \ltimes (G_x \times_{H_y} V_y)$ has to be isomorphic to $H_y$, and it proves the lemma.
\end{proof}

\begin{remark}
For the case of an effective orbifold $\CH$, Lemma \ref{localinject} follows directly from the 0-level immersion $\phi_0$. Assume that there is a nontrivial element $h \in Ker(\phi_1 |_{H_y})$. Fix a tangent vector $v \in T_{y} V_y$. Since the action of $\CH$ is effective, the difference of two vectors $v - h_{*}v$ is not a zero vector. By the assumption on $h$,
$$(\phi_0)_{*}(v - h_{*}v) = 0,$$
and it contradicts that $\phi_0$ is an immersion.
\end{remark}

\begin{remark}
In the above Lemma \ref{localinject}, $\phi_1$ may not be globally injective.
\end{remark}

We remark that the orbifold embedding is not  Morita invariant. Indeed, the following two examples illustrate this phenomenon.

\begin{figure}[h]
\begin{center}
\includegraphics[height=2.3in]{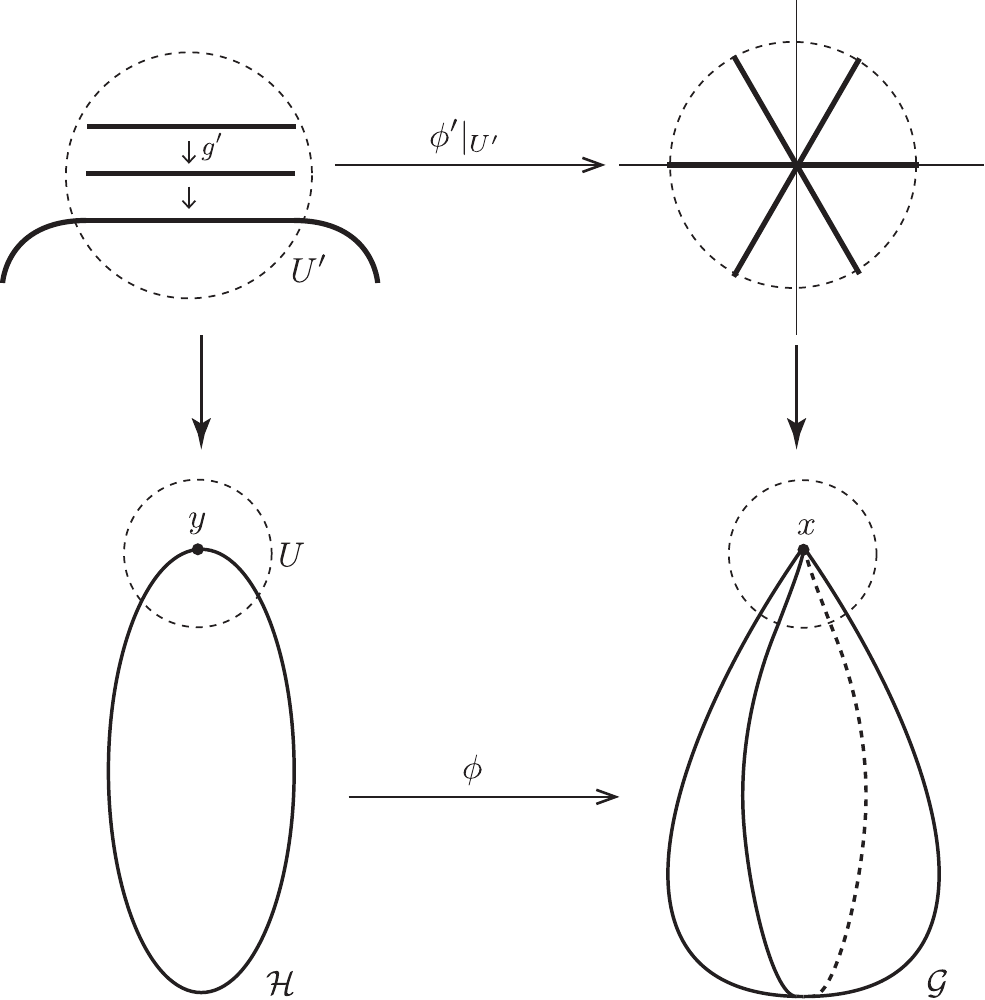}
\caption{An orbifold embedding and equivalence 1}\label{Morita}
\end{center}
\end{figure}

\begin{example}
Let $\mathcal{H}$ be a circle with the trivial orbifold groupoid structure and $\mathcal{G}$ be a teardrop whose local group at the unique singular point $x$ is $\ZZ /3$ as in Figure \ref{Morita}. The orbifold morphism $\phi : \mathcal{H} \to \mathcal{G}$ is not an orbifold embedding since it does not satisfy the second condition at $x \in \mathcal{G}$. 

However, we can change the orbifold structure of $\mathcal{H}$ as follows. Let $\phi(y) =x$ and $U$ be a open neighborhood of $y$ as in the figure. We add two more copies of $U$ to get new objects $U'$ and add additional arrows identifying three copies of $U$. Denote by $\mathcal{H}'$ the resulting orbifold. Note that there is an equivalence from $\mathcal{H}'$ to $\mathcal{H}$. The obvious modification $\phi' : \mathcal{H}' \to \mathcal{G}$ of $\phi$ is now an orbifold embedding.
\end{example}

\begin{figure}[h]
\begin{center}
\includegraphics[height=2.2in]{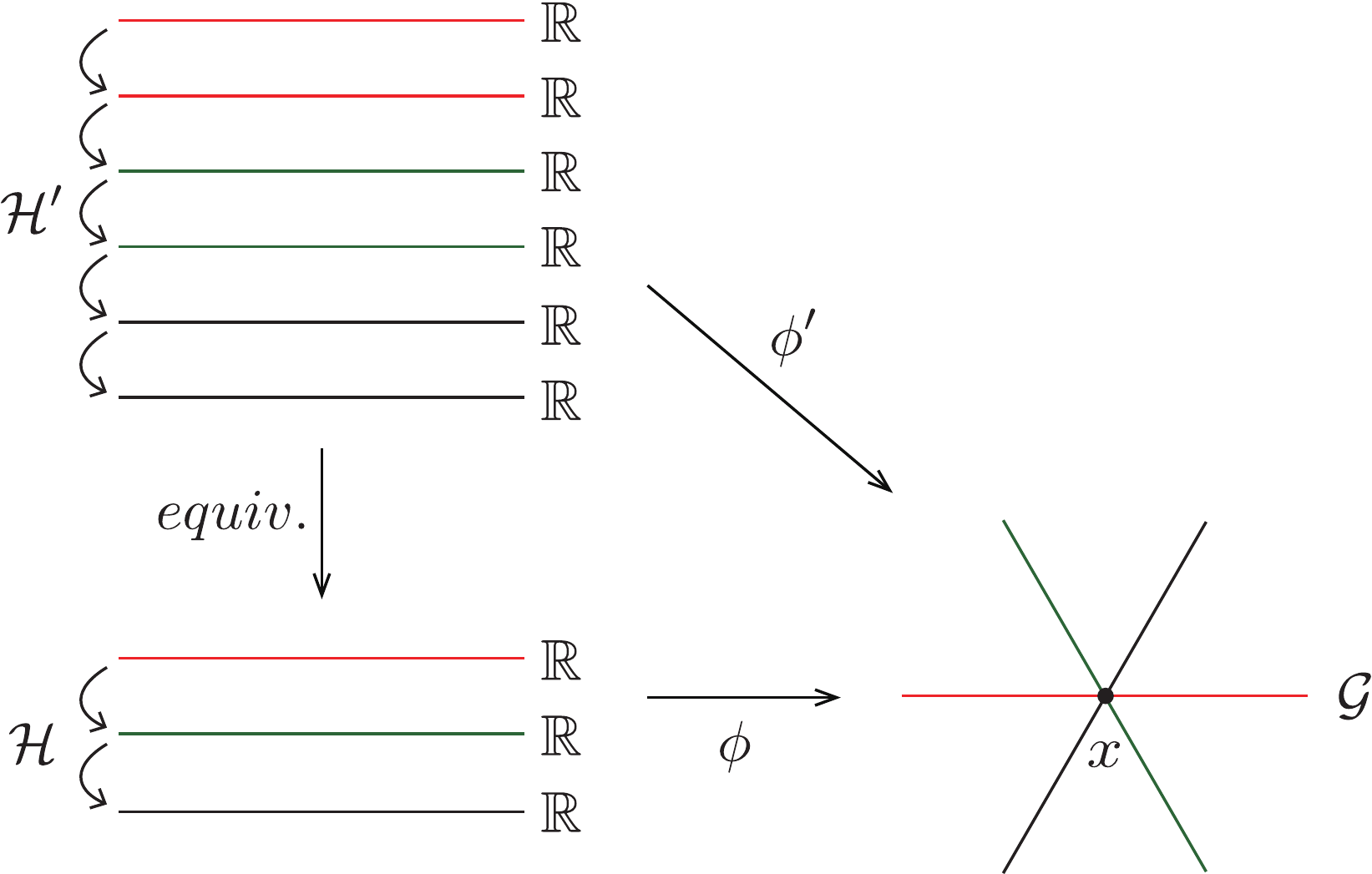}
\caption{An orbifold embedding and equivalence 2}\label{Morita2}
\end{center}
\end{figure}

\begin{example}
Let $\mathcal{H}$ be the disjoint union of three copies of real lines and $\mathcal{G}$ be $\RR^2$ equipped with a $\ZZ /3$ action generated by $2 \pi /3$-rotation. Consider an orbifold embedding $\phi : \mathcal{H} \to \mathcal{G}$ shown in Figure \ref{Morita2}. We similarly change the orbifold structure of $\mathcal{H}$ by adding three more copies of $\RR$ to $\mathcal{H}$ to get a new orbifold groupoid $\mathcal{H}'$, i.e. $H'_0 = \RR \times \ZZ_6$ and $((h,g),k) \in H'_1 = H'_0 \times \ZZ_6$ sends $(h,g) \to (h, kg)$. It is clear from Figure \ref{Morita2} that there is an equivalence $ \mathcal{H}' \to \mathcal{H}$, which is induced by the projection $\ZZ_6 \to \ZZ_3$. The morphism $\phi' : \mathcal{H}' \to \mathcal{G}$ is defined by the composition of $\phi$ and this equivalence. Then, we see that $\phi'$ is no longer an orbifold morphism because there is no transitive $G_x$ action on $\phi'^{-1} (x)$ where $x$ is the unique singular point in $\mathcal{G}$.
\end{example}

\begin{example}[{\bf Orbifold diagonal}]
As an example of an orbifold embedding, we introduce a diagonal suborbifold of product orbifolds.
\begin{definition}\label{orbdiag}
The diagonal suborbifold $\Delta$ is defined as $\CG \times_\CG \CG$ 
\end{definition}
\begin{lemma}
The natural map $\Delta=\CG \times_\CG \CG \to \CG \times \CG$ is an orbifold embedding.
\end{lemma}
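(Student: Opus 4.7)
The plan is to check the three clauses of Definition \ref{suborbifold} in turn. Since the identity $\CG \to \CG$ is trivially an equivalence, Lemma \ref{pbequiv} ensures that $\Delta = \CG \times_\CG \CG$ is a Lie groupoid. Unwinding Definition \ref{fiberprod} one identifies $\Delta_0 \cong G_1$ via $(x, g, y) \leftrightarrow g$, so the natural map $\phi_0 : \Delta_0 \to G_0 \times G_0$ is precisely $(s, t) : G_1 \to G_0 \times G_0$; on arrows, a morphism $(h, g, k) \in \Delta_1$ sends $(s(h), g, s(k))$ to $(t(h), k g h^{-1}, t(k))$.

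Condition (1) is immediate: because $\CG$ is \'etale, $s$ is a local diffeomorphism, so $d(s,t)$ is injective everywhere and $\phi_0$ is an immersion. For condition (3), I would show that $|\phi|$ is, up to homeomorphism, the diagonal inclusion $|\CG| \hookrightarrow |\CG| \times |\CG|$. The assignments $[g] \mapsto [s(g)]$ and $[x] \mapsto [1_x]$ are mutually inverse continuous maps between $|\Delta|$ and $|\CG|$; the less obvious direction uses the $\Delta$-arrow $(1_{s(g)}, g, g^{-1})$, which witnesses $[g] = [1_{s(g)}]$ in $|\Delta|$. Composing with $|\phi|$ realizes the latter as the diagonal, which is injective and proper since $|\CG|$ is Hausdorff.

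The main step is condition (2). Fix $(x, y) \in \mathrm{im}\,\phi_0$, slices $U_x, U_y$ with $\CG|_{U_x} \cong G_x \ltimes U_x$ and $\CG|_{U_y} \cong G_y \ltimes U_y$ so that $(\CG \times \CG)|_{U_x \times U_y} \cong (G_x \times G_y) \ltimes (U_x \times U_y)$, and choose a lift $g_0 \in \phi_0^{-1}(x,y)$. Transitivity of the $\Delta$-action on $\phi_0^{-1}(x, y)$ is easy: for any other lift $g$, the triple $(1_x, g_0, g g_0^{-1}) \in \Delta_1$ sends $g_0$ to $g$. Pick $V_{g_0} \subset G_1$ small enough that $s, t$ restrict to diffeomorphisms on it; then $\Delta|_{V_{g_0}} \cong H_{g_0} \ltimes V_{g_0}$, and solving $k g_0 h^{-1} = g_0$ forces $k = g_0 h g_0^{-1}$, so $H_{g_0} = \{(h, g_0 h g_0^{-1}) : h \in G_x\} \cong G_x$ sits inside $G_x \times G_y$. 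The candidate local isomorphism realizing \eqref{model} is
\[
(G_x \times G_y) \ltimes \bigl((G_x \times G_y) \times_{H_{g_0}} V_{g_0}\bigr) \longrightarrow \Delta|_{\phi_0^{-1}(U_x \times U_y)}, \qquad [(h, k), g] \mapsto k g h^{-1},
\]
where the products are interpreted through the local translation-groupoid charts on $U_x, U_y$.

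The hard part will be verifying that this assignment is a well-defined equivalence of Lie groupoids. One has to shrink the slices so that every arrow in $(s, t)^{-1}(U_x \times U_y) \subset G_1$ arises uniquely as $k g h^{-1}$ for some $g \in V_{g_0}$ and coset representative $(h, k) \in (G_x \times G_y)/H_{g_0}$; then check that the arrow level is compatible with the source/target formulas of $\Delta$ and intertwines the $(G_x \times G_y)$-actions, while the isotropies match by construction of $H_{g_0}$. The main subtlety is choosing $U_x, U_y$ small enough that distinct orbits of $\phi_0^{-1}(x, y)$ stay separated in nearby fibers, so that the orbit structure does not collapse under restriction.
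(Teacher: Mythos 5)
Your argument is correct and is essentially the paper's: the paper reduces to a global quotient $G\ltimes M$ and writes $\Delta$ as $(G\times G)\ltimes(\sqcup_g \Delta_g)$, which is exactly your identification $\Delta_0\cong G_1$ read through the local charts $G_x\ltimes U_x$, and your transitivity witness, isotropy computation $H_{g_0}=\{(h,g_0hg_0^{-1})\}$, and local-model map $[(h,k),g]\mapsto kgh^{-1}$ coincide with the paper's $\psi:((h,k),(x',g,gx'))\mapsto(hx',kgh^{-1},kgx')$. The one step you defer as ``the hard part'' is precisely what the paper carries out and it does go through: surjectivity follows from $(G_x\times G_y)$-equivariance of the map together with connectedness of the slice component $V_{g_0}$, and injectivity modulo $H_{g_0}$ follows from the fiber computation showing that two preimages of the same arrow differ by an element of the isotropy at $g_0$.
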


\begin{proof}
We verify that $\Delta$ above is a subgroupoid in the sense of Definition \ref{suborbifold}. It suffices to prove this when $\CG$ is a global quotient orbifold $G \ltimes M$. In this case, $\Delta$ is given by $(G \times G) \ltimes (\sqcup_g \Delta_g )$, where $\Delta_g = \{ (x, gx) : x \in M\}$ and $(h,k) \in G \times G$ takes $(x, g, gx)$ to $(hx, kgh^{-1}, kgx)$. (The second terms in the triples are used to distinguish $(x, gx)$ from $(x, g h x)$ for $h \in G_x$.) The most natural choice of orbifold morphisms will be $\phi_0 :(x,g,gx) \mapsto (x,gx) \in M \times M$ and $\phi_1 : (h,k) \mapsto (h,k) \in G \times G$. $\phi_0$ is clearly an immersion. Note that $\phi_1$ is injective.

Choose a point $p=(x,y)$ in $im (\phi_0)$. Let $U$ be a small connected neighborhood of $p$ in $M \times M$, which is preserved under the $(G \times G)_p$-action. Since $(x,y)$ is in the image of $\phi_0$, there is some $g$ in  $G$ satisfying $y=gx$. So, in particular $\phi_0^{-1} (x,y) = \{ (x,g', g'x) \, | \, g'x = gx = y\}$. $(G\times G)_p$ acts on $\phi_0^{-1} (p)$ transitively since $(g^{-1} g',  e)$ sends $(x,g',g'x) \in \phi_0^{-1} (p)$ to $(x,g,gx)$ and $g^{-1} g' \in G_x$ when $g'x = gx$.

Let $q$ denote $(x,g,gx) \in \phi_0^{-1} (p)$. Note that $(G \times G)_{q} = \{ (h,k) : h \in G_x, k \in G_{gx}, kgh^{-1} = g\}$. Let $V_g$ be the connected component of $\phi_0^{-1} (U)$ which contains $q$ ($V_g$ is given by $\Delta_{g} \cap \phi_0^{-1} (U)$). We define a smooth map $\psi$ from $(G \times G)_p \times V_g$ to $\phi_0^{-1} (U)$ by
\begin{equation}\label{diaglocalmod}
\psi :  ((h,k),(x',g,gx')) \mapsto (hx', kgh^{-1}, kgx').
\end{equation}
Then, $\psi$ is $(G \times G)_p$-equivariant by the definition of the $G \times G$-action on $\Delta$. Since $U$ is preserved under the $(G \times G)_p$ and $V_g$ is a connected component, $\psi$ should be surjective. 

Suppose two different points 
$$q_1 = ((h_1,k_1),(x_1,g,gx_1)) \quad \mbox{and} \quad q_2= ((h_2,k_2),(x_2,g,g x_2))$$
in $(G \times G)_p \times V_g$ are mapped to the same point in $\phi_0^{-1} (U)$ by $\psi$. This happens precisely when $(h_2^{-1} h_1, k_2^{-1} k_1)$ sends $(x_1, g, g x_1)$ to $(x_2, g, g x_2)$. In particular, we have $(h_2^{-1} h_1, k_2^{-1} k_1) \in (G \times G)_{q}$. Therefore, $\psi$ descends to a map
$$ \bar{\psi} : (G \times G)_p \times_{(G \times G)_q} V_g \to \phi^{-1} (U)$$
which is bijective. (Here, $(a,b) \in (G \times G)_q$ acts on the first factor of $(G \times G)_p \times V_g$ by $(h,k) \mapsto (ha^{-1}, kb^{-1})$.) Since the $(G \times G)_q$-action and the $(G \times G)_p$-action on $(G \times G)_p \times V_g$ commute, the $(G \times G)_p$-equivariance of $\psi$ implies that of $\bar{\psi}$.
\end{proof}
\end{example}
\section{Inertia orbifolds and orbifold embeddings}
In this section we show that given an orbifold embedding, there is an induced orbifold embedding
between their inertia orbifolds under abelian assumption.

First, let us recall inertia orbifolds.
 The following diagram defines a smooth manifold $\mathcal{S}_{\mathcal{G}}$, which can be interpreted intuitively as a set of loops (i.e. elements of local groups) in $\mathcal{G}$:
\begin{equation}
\begin{diagram}
\node{  \mathcal{S}_{\mathcal{G}} }\arrow{s,r}{\beta}\arrow{e,t}{} \node{G_1} \arrow{s,r}{(s,t)} \\
\node{G_0}\arrow{e,t}{diag} \node{G_0 \times G_0.}
\end{diagram}
\end{equation}
Then, the inertia orbifold $\Lambda \mathcal{G}$ will be an action groupoid $\mathcal{G} \ltimes \mathcal{S}_{\mathcal{G}}$. i.e.
$$ (\Lambda \mathcal{G} )_0 = \mathcal{S}_{\mathcal{G}},$$
$$ (\Lambda \mathcal{G} )_1 = G_1 \times_{G_0} \mathcal{S}_{\mathcal{G}} $$
where for $h \in G_1$ the induced map $h : \beta^{-1} ( s(h) ) \to \beta^{-1} ( t(h) )$ is given by the conjugation. More precisely, for any $g \in \beta^{-1} ( s(h) )$, set $h(g) = h g h^{-1}$. This gives a target map from $   (\Lambda \mathcal{G} )_1  $ to $ (\Lambda \mathcal{G} )_0  $ whereas the source map is simply the projection to the second factor of $(\Lambda \mathcal{G} )_1$. Note that $\beta^{-1} ( s(h) ) $ and $\beta^{-1} ( t(h) )$ are the sets of loops in $\mathcal{G}$ based at $s(h)$ and $t(h)$, respectively. Similarly, one can define $\mathcal{S}_{\mathcal{H}}$ and $\Lambda \mathcal{H}$ for a suborbifold $\CH$ of $\CG$. \\

Now, let us see how $\phi : \mathcal{H} \to \mathcal{G}$ induces a morphism $\Lambda \phi$ between inertia orbifolds. $\Lambda \phi_0$ should be a map from $(\Lambda \mathcal{H})_0 = \mathcal{S}_{\mathcal{H}}$ to $(\Lambda \mathcal{G})_0  = \mathcal{S}_{\mathcal{G}}$. Suppose $(h, y)$, $y=\beta(h)\in H_0$ is a loop $h : y \to y $ in $\mathcal{H}$. Then, the image of this loop is $(\phi_1 (h), \phi_0 (y) )$ or, $\phi_1 (h) : \phi_0 (y) \to \phi_0 (y)$, i.e.
$$ \Lambda \phi_0 : (h, y) \mapsto (\phi_1 (h), \phi_0 (y)).$$
$\Lambda \phi_1$ maps $(h', h) \in (
\Lambda \mathcal{H})_1 = H_1 \times_{H_0} \mathcal{S}_{\mathcal{H}}$ as follows:
$$\Lambda \phi_1 : (h', h) \mapsto (\phi_1 (h'), \phi_1 (h) ).$$
If $h : y \to y$, then $\phi_1 (h) : \phi_0 (y) \to \phi_0 (y)$.

\begin{lemma}
If $\CG$ is abelian, i.e. $G_x$ is an abelian group for each $x \in \CG_0$,
then $\Lambda \CH$-action on $\Lambda \phi_0^{-1} (g,x)$ is transitive.
\end{lemma}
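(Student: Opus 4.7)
The plan is to unpack both the source and target of $\Lambda \phi_0$ set-theoretically and then argue that an arrow witnessing transitivity of the $\CH$-action on $\phi_0^{-1}(x)$ (which exists because $\phi$ is an orbifold embedding) automatically also transports the loop component correctly, thanks to the abelianness hypothesis and Lemma \ref{localinject}.

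More concretely, an element of $\Lambda \phi_0^{-1}(g,x)$ is a pair $(h,y)$ with $y \in H_0$, $\phi_0(y) = x$, $h \in H_y$, and $\phi_1(h) = g$. Given two such pairs $(h_1,y_1)$ and $(h_2,y_2)$, I first invoke condition (2) of Definition \ref{suborbifold} to obtain an arrow $\alpha : y_1 \to y_2$ in $H_1$ (the $\CH$-action on $\phi_0^{-1}(x)$ is transitive). Then I consider the candidate element $(\alpha, h_1) \in (\Lambda \CH)_1 = H_1 \times_{H_0} \mathcal{S}_{\CH}$, whose target under the $\Lambda \CH$-action is $(\alpha h_1 \alpha^{-1}, y_2)$, and I must check this equals $(h_2, y_2)$.

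The key step is the equality $\alpha h_1 \alpha^{-1} = h_2$ inside $H_{y_2}$. For this I apply $\phi_1$: note that $\phi_1(\alpha) : \phi_0(y_1) = x \to \phi_0(y_2) = x$ is a loop at $x$, hence an element of $G_x$. Since $G_x$ is abelian and $g = \phi_1(h_1) \in G_x$, I get
\[
\phi_1(\alpha h_1 \alpha^{-1}) = \phi_1(\alpha)\, g\, \phi_1(\alpha)^{-1} = g = \phi_1(h_2).
\]
Both $\alpha h_1 \alpha^{-1}$ and $h_2$ therefore lie in $H_{y_2}$ and map to $g$ under $\phi_1$. Lemma \ref{localinject} says that $\phi_1$ is injective on the local isotropy group $H_{y_2}$, so the two elements agree.

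The main conceptual point (and the only place a hypothesis is really used beyond the embedding axioms) is the abelianness of $G_x$: without it, the conjugate $\phi_1(\alpha) g \phi_1(\alpha)^{-1}$ need not equal $g$ and we would have no reason to expect $\alpha h_1 \alpha^{-1}$ and $h_2$ to coincide. There is no real obstacle beyond keeping track of where each element lives; once one checks that $\phi_1(\alpha)$ is a loop at $x$ (which is immediate from $\phi_0(y_i) = x$), everything assembles formally.
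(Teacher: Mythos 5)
Your proof is correct, and it rests on the same pivot as the paper's: once an arrow connecting the two base points is found, abelianness of $G_x$ forces conjugation to fix the loop component. The execution differs, though. The paper works entirely inside the local model of condition \eqref{model}, where the fiber is written explicitly as $\{(g,[g',y])\}$ and the connecting arrow is produced by hand as $k=g_2g_1^{-1}\in G_x$; there the identity $kgk^{-1}=g$ finishes the argument immediately, because in that chart $\phi_1$ is essentially the inclusion of the arrow group and no further identification is needed. You instead argue globally: you extract $\alpha\in H_1$ from the transitivity clause of Definition \ref{suborbifold}, observe that $\phi_1(\alpha)\in G_x$ so that $\phi_1(\alpha h_1\alpha^{-1})=\phi_1(\alpha)\,g\,\phi_1(\alpha)^{-1}=g=\phi_1(h_2)$, and then need the extra input of Lemma \ref{localinject} (injectivity of $\phi_1$ on $H_{y_2}$) to upgrade equality after applying $\phi_1$ to equality of $\alpha h_1\alpha^{-1}$ and $h_2$ in $H_{y_2}$. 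What your route buys is independence from the explicit coordinates of the local model --- only the axioms of an orbifold embedding plus Lemma \ref{localinject} are used --- at the cost of that one extra injectivity step, which the paper's local-model computation renders invisible. Both arguments correctly isolate abelianness as the only place the hypothesis enters.
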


\begin{proof}
To observe the local behavior of $\Lambda \phi$, we use the local model of embeddings. Near $y \in H_0$, the local model and the morphism, again denoted by $\phi$, is given as follows:
$$ \phi : G_x \ltimes (G_x \times_{H_y} V_y) \to G_x \ltimes U_x ,$$
where $V_y$ and $U_x$ are suitable neighborhoods of $y$ and $x$, respectively and $x=\phi_0 (y)$. Note that $\phi_0 : G_x \times_{H_y} V_y \to  U_x$ is given as $\phi_0 [g, y'] = g \cdot \phi_0 (y')$ and $\phi_1=(id, \phi_0) : G_x \times (G_x \times_{H_y} V_y ) \to G_x \times U_x$. One can easily check that $\phi$ is well-defined.

Recall that we assumed $\phi_1$ to be injective and identify $H_y$ as a subgroup of $G_x$. We observe the fiber $\Lambda \phi_0^{-1} (g, x)$ for a loop $g : x \to x$ in $\mathcal{G}$ in these local models.

In our local model, any objects in $\Lambda \phi_0^{-1} (g,x)$ can be written as $(g, [g', y])$ for some $g' \in G_x$.
Suppose that $(g, [g_1, y])$ and $(g, [g_2, y])$ are distinct objects in $\Lambda \phi_0^{-1} (g,x)$.
Now we want to find $k \in \Lambda \CH_1$ which sends $(g, [g_1, y])$ to $(g, [g_2, y])$, i.e. $k$ such that $k \cdot (g, [g_1, y])  =  (g, [g_2, y])$ or, equvalently $(kgk^{-1}, [k g_1, y]) = (g, [g_2, y])$. This can be simply achieved by choosing $k = g_2 g_1^{-1}$.

%
%

\end{proof}

For general $\CG$, $\Lambda \CH$-action on $\Lambda \phi_0^{-1} (g,x)$ is not necessarily transitive. 
In the last paragraph of the proof of the lemma, the abelian assumption is crucial to find $k \in \Lambda \CH_1$ satisfying $(kgk^{-1}, [k g_1, y]) = (g, [g_2, y])$. If $G_x$ is not abelian, such $k$ may not exist. One may try with $k = g_2 g_1^{-1}$ which sends $[g_1,y]$ to $[g_2,y]$, but the loop $kgk^{-1}$ is different from $g$ if $k$ does not commute with $g$. 
See the following example. 

%
%
%
%

\begin{example}
Let $G$ be the subgroup of ${\rm SL}\, (2, \CC)$ generated by
\begin{equation}
a=\left(\begin{array}{cc} \rho & 0 \\0 & \rho^{-1} \end{array}\right) \quad b=
\left(\begin{array}{cc}0 & 1 \\ -1 & 0\end{array}\right)
\end{equation}
where $\rho = e^{ \pi i /3}$. $G$ is called the binary dihedral group of order $12$. Consider its fundamental representation on $\CC^2$. Relations on generators $a$ and $b$ are given by 
$$a^6 = b^4 = 1,\quad b a b^{-1} = a^{-1}, \quad a^3 = b^2.$$
Let $V$ be the first coordinate axis in $\CC^2$. Then, the subgroup $H$ of $G$ generated by $a$  acts on $V$. Now,
$$G \ltimes (G \times_H V)$$ 
gives rise to an orbifold embedding into $[\CC^2 / G]$ whose image is the union of two coordinate axes in $\CC^2$. Note that on the level of inertia, $(a, [e, 0])$ and $(a, [b,0])$ in $\Lambda \left( G \ltimes (G \times_H V) \right)$ are both mapped to $(a, (0,0))$ in $\Lambda [\CC^2 /G]$ by the induced map between inertias. 

We claim that there is no arrow between $(a, [e, 0])$ and $(a, [b,0])$ in the inertia  $\Lambda \left( G \ltimes (G \times_H V) \right)$ and therefore, the induced map is not an orbifold embedding. Such an arrow would first send $[e,0]$ to $[b,0]$ and hence, it would be of the form $b h$ for some $h \in H$. This arrow sends the loop $a$ at $[e,0]$ to the loop $(bh) \, a \, (bh)^{-1}$ at $[b,0]$. However, for any $h \in H$, $(bh) \, a \, (bh)^{-1} = b a b^{-1} = a^{-1}$ since $H$ is abelian. 
\end{example}

Finally we prove that $\Lambda \phi : \Lambda\CH \to \Lambda \CG$ satisfies the condition (2) of the orbifold embedding \eqref{model} under the abelian assumption.

\begin{prop}\label{thminertia}
Given an  orbifold embedding $\phi: \CH \to \CG$, consider the induced map between inertia orbifolds $\Lambda \phi : \Lambda\CH \to \Lambda \CG$.
If $\CG$ is an abelian orbifold, then $\Lambda \phi$ is again an orbifold embedding.
\end{prop}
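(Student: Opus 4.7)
The plan is to verify the three conditions of Definition \ref{suborbifold} for $\Lambda \phi$. The transitivity portion of condition (2) is precisely what the preceding lemma provides, so I need to handle the immersion condition (1), the local model structure \eqref{model}, and the properness and injectivity of $|\Lambda \phi|$ in condition (3).

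For conditions (1) and (2), I would work in the local model $\CH|_{\phi_0^{-1}(U_x)} \cong G_x \ltimes (G_x \times_{H_y} V_y)$ guaranteed by the fact that $\phi$ is an orbifold embedding. The key computation is the local description of $\mathcal{S}_\CH$: a point $(h', [k, y'])$ lies in $\mathcal{S}_\CH$ precisely when $h' \cdot [k, y'] = [k, y']$, which unwinds to $k^{-1} h' k \in (H_y)_{y'}$; since $G_x$ is abelian (and hence so is $H_y$, viewed as a subgroup via Lemma \ref{localinject}), this collapses to $h' \in (H_y)_{y'}$. The connected component of $\mathcal{S}_\CH$ containing $(h, y) = (h, [e, y])$ is therefore diffeomorphic to $G_x \times_{H_y} \mathrm{Fix}_{V_y}(h)$, with the $\CH$-arrow action becoming the usual translation by $G_x$ (no nontrivial conjugation, by abelianity). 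Setting $\tilde V_y := \mathrm{Fix}_{V_y}(h)$, this yields
\[
\Lambda \CH|_{\Lambda \phi_0^{-1}(U_{(g,x)})} \cong G_x \ltimes \bigl( G_x \times_{H_y} \tilde V_y \bigr),
\]
which matches the model \eqref{model} once one notes that abelianity makes the isotropy of $(g,x)$ in $\Lambda \CG$ equal to all of $G_x$, and similarly the isotropy of $(h,y)$ in $\Lambda \CH$ equal to $H_y$. Condition (1) then comes for free, since $\Lambda \phi_0$ restricts on this component to $\phi_0|_{\mathrm{Fix}_{V_y}(h)}$, and the restriction of an immersion to a submanifold is still an immersion.

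For condition (3), I would attack injectivity as follows: if $|\Lambda \phi|([h_1, y_1]) = |\Lambda \phi|([h_2, y_2])$, there exists $k \in G_1$ satisfying $s(k) = \phi_0(y_1)$, $t(k) = \phi_0(y_2)$, and $k \phi_1(h_1) k^{-1} = \phi_1(h_2)$. Injectivity of $|\phi|$ (from the definition of orbifold embedding) yields some $l \in H_1$ from $y_1$ to $y_2$. Writing $k = g \cdot \phi_1(l)$ with $g \in G_{\phi_0(y_2)}$ and invoking abelianity of $\CG$, the element $g$ cancels from the conjugation to give $\phi_1(l h_1 l^{-1}) = \phi_1(h_2)$; both sides lie in $H_{y_2}$, so Lemma \ref{localinject} forces $l h_1 l^{-1} = h_2$, and $l$ then furnishes an arrow in $\Lambda \CH$ realizing the required equivalence. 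Properness should follow formally from properness of the projection $\mathcal{S}_\CG \to G_0$ (finite fibers over a proper étale base) combined with properness of $|\phi|$ and a standard two-square diagram chase.

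The principal obstacle is the careful verification of the local model: one must simultaneously track the quotient structure in $G_x \times_{H_y} V_y$, the loop condition defining $\mathcal{S}_\CH$, and the arrow action by conjugation, and then see that abelianity causes all of these to fit together compatibly. The injectivity argument, while requiring a small computation with the conjugation relation, is clean once Lemma \ref{localinject} is invoked; properness is essentially automatic.
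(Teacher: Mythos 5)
Your proposal is correct and follows essentially the same route as the paper: both verify condition (2) by computing, in the local model $G_x \ltimes (G_x \times_{H_y} V_y)$, that $[k,y']$ is fixed by a loop $h$ exactly when $y'$ itself is fixed (abelianity collapsing the conjugation $k^{-1}hk$ to $h$), yielding the chart $G_x \ltimes \bigl(G_x \times_{H_y} V_y^h\bigr)$, and both obtain the immersion condition by restricting $\phi_0$ to fixed-point loci. Your explicit verification of condition (3) — the conjugation computation for essential injectivity via Lemma \ref{localinject} and the properness diagram chase — goes beyond the paper, which treats that condition as implicit, but it is a sound addition rather than a different method.
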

\begin{proof}
First of all, it is clear that $\Lambda \phi_0$ is an immersion. This follows from the fact that the sector fixed by the loop $h \in H_1$ should be mapped through $\phi_0$ to the sector fixed by $\phi_1 (h) \in G_1$. So ${\Lambda \phi_0}_\ast$ is essentially the same as  ${\phi_1}_\ast$, which sends tangent vector to the sector fixed by $h \in H_1$ to the one determined by $\phi_1 (h)$.

The only non-trivial part is the second condition. For this, we can work on local charts.
Suppose $\phi_0 (y) = x$ for $y \in \CH_0, x \in \CG_0$, and we fix a neighborhood $U_x$ of $x$ in $G_0$
from the embedding property of $\phi$, so that $H|_{\phi^{-1}(U_x)}$ can be identified with the
action groupoid $G_x \ltimes (G_x \times_{H_y} V_y)$.
We may identify $H_y$ as a subgroup of $G_x$ via the embedding map.

We fix an element $g \in H_y \subset G_x$. In general, the local chart of inertia orbifold $\Lambda \CG$ near $(g, x) \in \Lambda\CG_0$  can be written as 
\begin{equation}\label{inerchart}
C_G (g) \ltimes \big( U_x^g \times \{g\} \big),
\end{equation}
where $U_x^g$ is the set of $g$-fixed points in $U_x$ and $C_G (g) = \{ h \in G_x | hg = gh\}$ acts on $U_x^g$ by the left multiplication. We put $\{g\}$ in \eqref{inerchart} to indicate the sector in the inertia orbifold $\Lambda\CG$, and we will drop it for notational simplicity in the following.

In our case, $C_G (g) = G_x$ since $\CG$ is abelian. We rewrite the local chart \eqref{inerchart} as
\begin{equation}\label{inerchart2}
G_x \ltimes U_x^g.
\end{equation}
Choose an element $(g, [e, y])$ in $\Lambda \phi_{0}^{-1}(g, x)$. Then $V_y^g$ is an open neighborhood of $(g, [e, y])$ and 
$$\Lambda\CH|_{V_y^g} \cong H_y \ltimes V_y^g.$$
Note that $H_y = C_H (g) := \{ h \in H_y | hg = gh \}$, since $H_y$ is a subgroup of the abelian group $G_x$.

We claim that the inverse image of the local chart \eqref{inerchart2} of $(g,x)$ by $\Lambda \phi^{-1}$ can be written as
$$ G_x \ltimes \big(G_x \times_{H_y} V_y^g \big).$$
To see this, we only need to show that the twisted $g$-sector in the local chart of $\CH |_{\phi^{-1}(G_x \ltimes U_x)}$ is isomorphic to $G_x \ltimes \big(G_x \times_{H_y} V_y^g \big)$. It can be checked as follows:

If $[g', z] \in G_x \times_{H_y} V_y$ is a fixed point of $g$, then $[gg', z] = [g', z]$. By definition, this happens if $gg' = g'h$ and $hz = z$ for some $h \in H_y$. This is equivalent to $(g')^{-1}gg' z = z$, and by the abelian assumption on $G_x$, $gz = z$.
Hence, objects in $g$-twist sector of $\CH |_{\phi^{-1}(G_x \ltimes U_x)}$ are contained in $G_x \times_{H_y} V_y^g$. Conversely, using the condition that $G_x$ is abelian and $g \in H_y$, it follows that any element $[g', z] \in G_x \times_{H_y} V_y^g$ is fixed by $g$.

By the definition of arrows in an inertia groupoid,
$G_x \times ( G_x \times_{H_y} V_y^g )$ is the arrow space of the $g$-twisted sector of $\CH |_{\phi^{-1}(G_x \ltimes U_x)}$ with an obvious action map, and this proves the proposition.


%
%
\end{proof}

\section{Orbifold embeddings and equivariant immersions}
In this section we show that equivariant immersions which are {\em strong} (in the sense that will be defined later) give rise to orbifold embeddings between orbifold quotients.

First, let us review orbifold quotients and its groupoid analogue, translation groupoids.
Let $G$ be a compact Lie group which acts on $M$ smoothly. 
The quotient $[M/G]$ naturally has a structure of a translation groupoid.
\begin{definition}
Suppose a Lie group $G$ acts smoothly on a manifold $M$ from the left. The translation groupoid $[G \ltimes M]$ assoicated to this group action is defined as follows.
Let $(G \ltimes M)_0 := M$ and $(G \ltimes M)_1 := G \times M$, with $s : G \times M \to M$ the projection and $t : G \times M \to M$ the action. The other structure maps are defined in the natural way.
\end{definition}

In particular, we are interested in group actions which give rise to an orbifold groupoid structure.
\begin{definition}
A $G$-action on $M$ is said to be locally free if the isotropy groups $G_p$ are discrete for all $p \in M$.
\end{definition}
Now we assume that the $G$-action on $M$ is locally free. The compactness of $G$ implies that $G_x$ is finite for all $x \in M$. Since $G$ acts on $M$ locally freely, we have a representation of $[M/G]$ as an orbifold groupoids in the following manner which is called  the slice representation in \cite{MP}.

\begin{prop}
For any translation groupoid $[M/G]$, there is an orbifold groupoid $\CG$ with an equivalence groupoid homomorphism $p : \CG \to [M/G]$.
\end{prop}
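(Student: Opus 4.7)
The plan is to build $\CG$ from a slice atlas for the $G$-action on $M$ and then recognize the resulting projection as an equivalence of groupoids in the spirit of Lemma \ref{pbequiv}. Since $G$ is compact and acts locally freely, the slice theorem yields, for every $x \in M$, a $G_x$-invariant slice $S_x \subset M$ of dimension $\dim M - \dim G$, transversal to the orbit $Gx$, together with a $G$-equivariant diffeomorphism from a tubular neighborhood of $Gx$ onto $G \times_{G_x} S_x$. I would first choose a locally finite family $\{x_i\}$ whose associated tubes cover $M$, set $V := \coprod_i S_{x_i}$, and let $\pi \colon V \to M$ be the natural smooth map; by construction $\pi$ is a surjective submersion whose image meets every orbit.

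Next, define $\CG$ as the pullback $\pi^{\ast}[M/G]$, namely
\begin{equation*}
\CG_0 := V, \qquad \CG_1 := \{(v,g,v') \in V \times G \times V : g \cdot \pi(v) = \pi(v')\},
\end{equation*}
with source, target and composition inherited from $[M/G]$, and set $p \colon \CG \to [M/G]$ by $p_0(v) := \pi(v)$ and $p_1(v,g,v') := (g, \pi(v))$. The claim that $p$ is an equivalence then follows from the pullback argument of Lemma \ref{pbequiv} applied to the unit groupoid on $V$: essential surjectivity reduces to $\pi$ meeting every $G$-orbit, while the fibered-product condition in Definition \ref{equivalence}(ii) is built into the very definition of $\CG_1$.

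It remains to verify that $\CG$ is an orbifold groupoid. Properness is inherited from $[M/G]$ since $G$ is compact. The crucial step is \'etaleness of the source map $s \colon \CG_1 \to V$. Fixing an arrow $(v_0, g_0, v'_0)$ with $v_0 \in S_{x_i}$ and $v'_0 \in S_{x_j}$, I would translate into the tube coordinates $G \times_{G_{x_j}} S_{x_j}$ around the orbit through $v'_0$: by transversality of the slice to orbits together with the discreteness of $G_{x_j}$, the equation $g \cdot \pi(v) \in S_{x_j}$ admits a unique smooth solution $g(v)$ near $g_0$ for $v$ varying in a neighborhood of $v_0$. This yields a smooth local inverse to $s$, and by symmetry to $t$, proving that $\CG$ is \'etale.

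The hard part will be this last \'etaleness step. The slice theorem supplies only an abstract equivariant diffeomorphism with a twisted product, and one must extract from it the concrete local rigidity of translating group elements between slices. It is precisely here that local freeness, finiteness of the isotropy, and the transversality of the slices combine to endow $\CG_1$ with the manifold structure on which $s$ and $t$ are local diffeomorphisms.
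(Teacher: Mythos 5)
Your construction is essentially identical to the paper's: both take the disjoint union of normal slices from the slice theorem as the object space, define the arrow space as the pullback of $G\times M \to M\times M$ along the induced map to $M$, and observe that the resulting projection is an equivalence. The paper simply defers the verification that the result is an orbifold groupoid (in particular the \'etaleness you work out by hand) to Theorem 4.1 of \cite{MP}.
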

\begin{proof}
By the slice theorem, we can cover $M$ by a collection of $G$-invariant open sets $\{U_i\}$ with $G$-equivariant diffeomorphisms
\begin{align*}
	\psi_i : G \times_{G_i} V_i \to U_i
\end{align*}
where $V_i$ is a normal slice with local action of $G_i \leq G$.
Define $\CG$ as follows. Let $G_0 := \sqcup_i V_i$ be the disjoint union of all the $V_i$, and define a map
$p : G_0 \to M$ as $p(i,v) := \psi_i([1,v])$.
Define $G_1$ as the pullback bundle of following diagram.
\begin{equation*}
\begin{diagram}
\node{G_1} \arrow{s,l}{(s,t)} \arrow{e} \node{G \times M} \arrow{s,r}{(s,t)}\\
\node{G_0 \times G_0} \arrow{e,b}{(p,p)} \node{M \times M}
\end{diagram}
\end{equation*}
Then groupoid homomorphism $p : \CG \to [M/G]$ is an equivalence. See the proof of Theorem 4.1 in \cite{MP} for more details.
\end{proof}

The converse in general still remains as a conjecture. The conjecture was partially proven in the case of effective orbifold groupoids (Theorem 1.23 of \cite{ALR}).
\begin{conjecture}\label{globalquot}
Every orbifold groupoid can be represented by translation groupoid with locally free group action.
\end{conjecture}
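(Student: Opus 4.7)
The plan is to extend the known effective case (Theorem~1.23 of \cite{ALR}) to all orbifold groupoids via an orthonormal frame bundle construction, and then to incorporate the ineffective isotropy by enlarging both the total space and the acting group. First I would handle the effective case. Fix a Riemannian metric on $\CG$ and set $M := \mathrm{Fr}(\CG)$, the orthonormal frame bundle, which locally on a chart $V_\alpha/G_\alpha$ is $\mathrm{Fr}(V_\alpha)$. Effectiveness of the $G_\alpha$-action forces the induced action on $\mathrm{Fr}(V_\alpha)$ to be free, since any non-identity element must move some tangent vector and hence every frame. Thus $M$ is a smooth manifold on which $O(n)$ acts with finite stabilizers, and the slice representation of the preceding Proposition exhibits $[M/O(n)]$ as Morita equivalent to $\CG$.

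For a general (possibly non-effective) orbifold groupoid, let $K_x = \ker\bigl(G_x \to \mathrm{GL}(T_x V_\alpha)\bigr)$ be the ineffective kernel, fitting into a short exact sequence
\begin{equation*}
1 \longrightarrow \mathcal{K} \longrightarrow \CG \longrightarrow \CG^{eff} \longrightarrow 1
\end{equation*}
where $\mathcal{K}$ is a bundle of finite groups over $|\CG|$. The strategy is to first apply the step above to $\CG^{eff}$, producing $M^{eff}$ with a locally free $O(n)$-action, and then to twist $M^{eff}$ by a principal $\mathcal{K}$-torsor recovering the ineffective data. In the favorable case where $\mathcal{K}\cong K$ is globally constant and central, one can carry this out by taking a central extension $1 \to K \to \tilde{G} \to O(n) \to 1$ and letting $\tilde{G}$ act on an appropriate $K$-torsor over $M^{eff}$; the translation groupoid of this action should then be Morita equivalent to $\CG$.

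The hardest step by far is gluing this twisted construction globally. In full generality $\mathcal{K}$ can vary over $|\CG|$, the extension of $G_x$ over $G_x^{eff}$ need not be central, and there is a potentially non-trivial global obstruction in $H^2(|\CG|, \mathcal{K})$ (or its non-abelian analogue) to realizing the total space as a smooth manifold carrying a compact Lie group action with finite stabilizers. Resolving this cohomological obstruction is, as best I can tell, exactly the content of the open part of the conjecture beyond the effective case; a complete proof would either need to exhibit a Morita-equivalent model in which $\mathcal{K}$ becomes tractable, or show that the obstruction always vanishes (for instance by passing to a suitable resolution of $\CG$). Short of this, the plan above can only be expected to settle the special subcase in which the ineffective kernel is a constant central subgroup, which is why the statement is posed as a conjecture rather than a theorem.
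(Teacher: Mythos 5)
The paper offers no proof of this statement: it is explicitly left as an open conjecture, with only the effective case cited as known (Theorem~1.23 of \cite{ALR}). Your sketch of the effective case via the orthonormal frame bundle --- effectiveness forcing the $G_\alpha$-action on $\mathrm{Fr}(V_\alpha)$ to be free, yielding a locally free $O(n)$-action whose translation groupoid is Morita equivalent to $\CG$ --- is precisely the standard argument behind that theorem, and you correctly identify the ineffective kernel $\mathcal{K}$ (its possible non-constancy, non-centrality, and the attendant gluing obstruction) as exactly the part that remains open. So your proposal is consistent with the paper's treatment; no complete proof exists to compare against, and you rightly do not claim one.
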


Now, let us recall the definition of an equivariant immersion and introduce what we call strong equivariant immersion.
\begin{definition}
Let $N, M$ be G-manifolds. A $G$-equivariant immersion from $N$ into $M$ is a smooth map $\iota : N \to M$ such that
\begin{enumerate}
\item the derivative $d \iota : T_x N \to T_{\iota(x)} M$ is injective at every point in $N$;
\item $\iota(g \cdot x)=g \cdot \iota(x)$.
\end{enumerate}
\end{definition}

When $\iota$ is an equivariant immersion,  the inverse image of $p \in \iota(N) \subset M$ admits a natural $G_p$ action. If $q \in N$ is a point in $\iota^{-1} (p)$, then for $g \in G_p$
\begin{equation}
\iota(g \cdot q) = g \cdot \iota(q) = g \cdot p = p.
\end{equation}

\begin{definition}
Suppose the $G$-action on $N$ is locally free and $\iota : N \to M$ be a $G$-equivariant immersion. We call $\iota$ a {\em strong} $G$-equivariant immersion if for every $p \in M$,  $G_p$ action on $\iota^{-1} (p)$ is transitive.
\end{definition}

Here is an example. Let $N$ be a submanifold of $M$, which may not be necessarily preserved by $G$-action. We take $G$ copies of $N$, and denote it by $\WT{N}$. i.e. $\WT{N} = G \times N$. $\WT{N}$ admits a natural $G$-action
\begin{equation}\label{WTL}
g : (h,x) \mapsto (gh,x)
\end{equation}
for $g,h \in G$ and $x \in N$.
An immersion $ \iota: \WT{N} \to M$ defined by $\iota (g,x) = g \cdot x$ is then $G$-equivariant.
\begin{lemma}

The $G$-equivariant immersion $\WT{\iota}:\WT{N} \to M$ obtained above is strong if and only if 
\begin{equation}\label{eq:strong}
N \cap g \cdot N = N^g. 						
\end{equation}
for all $g \in G$.
\end{lemma}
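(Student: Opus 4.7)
The plan is to unpack the definition of strongness directly, exploiting the very rigid form of the action on $\WT{N}$. Since $g \cdot (h,x) = (gh,x)$ touches only the first factor, any equality $h \cdot (g_1, x_1) = (g_2, x_2)$ in $\WT{N}$ forces simultaneously $h g_1 = g_2$ \emph{and} $x_1 = x_2$. The fiber over $p \in M$ is
\[
\WT{\iota}^{-1}(p) = \{(g,x) \in G \times N : g \cdot x = p\},
\]
and strongness asks that $G_p$ act transitively on it. Note that the $G$-action on $\WT{N}$ is free, so the local-freeness hypothesis in the definition of strongness is automatic.

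For the forward direction ($\Rightarrow$), I would assume $\WT{\iota}$ is strong and fix $g \in G$ together with $y \in N \cap g \cdot N$. Writing $y = g \cdot x$ for some $x \in N$, both $(e,y)$ and $(g,x)$ lie in $\WT{\iota}^{-1}(y)$. Transitivity yields $h \in G_y$ with $h \cdot (e, y) = (g, x)$; by the rigidity above this forces $h = g$ and $y = x$. Since $h \in G_y$, we get $g \cdot y = y$, so $y \in N^g$. The reverse containment $N^g \subseteq N \cap g\cdot N$ is immediate (if $g\cdot y = y \in N$, then $y = g \cdot y \in g\cdot N$), which establishes $N \cap g \cdot N = N^g$.

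For the reverse direction ($\Leftarrow$), assume $N \cap g \cdot N = N^g$ for every $g$, take $p \in M$, and pick any two fiber elements $(g_1,x_1), (g_2,x_2) \in \WT{\iota}^{-1}(p)$. Set $k = g_1^{-1} g_2$; from $g_1 x_1 = g_2 x_2$ we get $x_1 = k \cdot x_2$, so $x_1 \in N \cap k \cdot N = N^k$, whence $k \cdot x_1 = x_1$. Substituting back gives $g_2 x_1 = g_1 x_1 = p = g_2 x_2$, and since $g_2$ acts as a diffeomorphism of $M$ this forces $x_1 = x_2$. Then $h := g_2 g_1^{-1}$ satisfies $h \cdot p = g_2 x_1 = p$, so $h \in G_p$, and $h \cdot (g_1, x_1) = (g_2, x_1) = (g_2, x_2)$, proving transitivity.

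The proof is essentially a bookkeeping exercise, and there is no real obstacle; the only substantive point is the observation that rigidity of the action on the first factor of $\WT{N}$ simultaneously pins down the group element and forces equality of the $N$-components, so that the condition \eqref{eq:strong} is exactly what is needed to make the would-be translator $g_2 g_1^{-1}$ fix $p$.
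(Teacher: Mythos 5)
Your proof is correct and follows essentially the same route as the paper's: both arguments exploit the rigidity of the action $g\cdot(h,x)=(gh,x)$ to force the translating element and the $N$-component simultaneously, reducing strongness to the condition $N\cap g\cdot N=N^g$. If anything, your treatment of the converse (transitivity on an arbitrary pair of fiber elements via $k=g_1^{-1}g_2$) is spelled out more explicitly than the paper's "up to the $G$-action" reduction.
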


\begin{proof}
From the definition of $\iota$, only the image under $\iota$ of a point in $h \cdot N \cap g \cdot N$ can have a multiple fiber. Up to the $G$-action, it suffices to consider a point, say  $y \in N \cap g \cdot N$. Then there exists $x \in N$ such that $g \cdot x =y$. Observe that $(1,y)$ and $(g,x)$ in $\WT{N}$ maps to the same point $y \in M$.  For $\iota$ to be strong, there should be a group element mapping $(1,y)$ to $(g,x)$, and from \eqref{WTL}, this implies $x=y$. Therefore, $g \cdot x = x$ and, hence $x \in N^g$.
\end{proof}

When a nontrivial subgroup $G_N$ of $G$ preserves $N$ but do not fix $N$, then condition \eqref{eq:strong} cannot be satisfied in general. However, we may try to use the minimal number of copies of $N$. Define $G_N$ so that we have the property, $g \cdot N = h \cdot N$ if and only if $g^{-1} h \in G_N$. Thus, for an element $\alpha$ of the coset space $G/ G_N$, $\alpha N$ is well defined. Let
$$\WT{N} = \bigcup_{\alpha \in G/G_N} \alpha N \times \{ \alpha \}.$$
$\WT{N}$ is a $G$-space by letting 
$$g : (x, \alpha)  \mapsto (g \cdot x, g \cdot \alpha).$$
Obviously, the natural immersion $\iota : \WT{N} \to M$, $\iota( x, \alpha) = x$, is $G$-equivariant. 

With this construction, we can interpret the orbifold diagonal for a global quotient orbifolds (cf. \ref{orbdiag}) as a strong equivariant immersion: Suppose a finite group $G$ acts on $M$ and let $N$ be the diagonal submanifold of $M \times M$. Then, $G \times G / \Delta_G$ parametrizes sheets of the domain of the immersion where $\Delta_G = \{ (g,g) | g \in G \}$. i.e.
$$ \WT{N} = \bigcup_{\alpha \in G \times G / \Delta_G} \alpha N \times \{ \alpha \}.$$
To see that $\WT{N} \to M \times M$ is strong, assume $( x,gx, [1,g])$ and $(x,hx, [1,h])$ are mapped to the same point $z$ in $M \times M$ ($[1,g],[1,h] \in G \times G / \Delta_G$). Then, $(h^{-1} g, 1)$ belongs to the local isotropy of $z=(x,gx) = (x,hx) \in M \times M$ and it sends $( x,gx, [1,g])$ to $(x,hx, [1,h])$ since $h^{-1} g x = x $ and 
$$(h^{-1}g , 1) [1,g] = [h^{-1}g, g] = [h^{-1} ,1] = [1,h].$$

One nice property which follows from the strong condition is that the strong equivariant immersions always induce injective maps between the quotient spaces.
\begin{lemma}\label{underiota}
If $\iota : N \to M$ is a strong $G$-equvariant immersion between two $G$-spaces $N$ and $M$, then,
$$|\iota| : |N/G| \to |M/G|$$
is injective.
\end{lemma}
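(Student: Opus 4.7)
The plan is to chase two points in $N$ whose images lie in the same $G$-orbit of $M$, and use the strong condition to lift this relation back to $N$. Concretely, I would start by taking $x, y \in N$ with $|\iota|([x]) = |\iota|([y])$ in $|M/G|$. By definition of the quotient space, this means $\iota(x)$ and $\iota(y)$ lie in the same $G$-orbit of $M$, so there exists $g \in G$ with $\iota(y) = g \cdot \iota(x)$.

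The next step is to apply $G$-equivariance of $\iota$, which gives $\iota(y) = g \cdot \iota(x) = \iota(g \cdot x)$. So $y$ and $g \cdot x$ are two points of $N$ with the same image $p := \iota(y) \in M$, i.e.\ both belong to the fiber $\iota^{-1}(p)$. This is where the strong hypothesis enters: by assumption, the isotropy $G_p$ acts transitively on $\iota^{-1}(p)$, so there exists $h \in G_p$ such that $h \cdot (g \cdot x) = y$. Setting $g' := hg \in G$, we obtain $y = g' \cdot x$, which shows $[x] = [y]$ in $|N/G|$, establishing injectivity of $|\iota|$.

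There is no real obstacle here; the argument is essentially a one-line consequence of unwinding the definitions. The only thing that requires a moment of care is verifying that $h$ really does belong to $G_p$ (so that its action on $g \cdot x$ is an allowed use of the strong transitivity hypothesis), but this follows tautologically from $h$ being chosen from the isotropy of the common image point $p = \iota(y) = \iota(g \cdot x)$. I would therefore present the proof as a short direct argument, roughly three or four lines long, with the strong equivariance used exactly once at the key step of passing from $\iota(y) = \iota(g \cdot x)$ to a relation $y = (hg) \cdot x$ in $N$ itself.
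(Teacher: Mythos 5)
Your argument is correct and follows essentially the same route as the paper: use equivariance to place $y$ and $g\cdot x$ in the same fiber $\iota^{-1}(p)$, then invoke transitivity of the $G_p$-action on that fiber to produce $h \in G_p$ with $y = (hg)\cdot x$. The paper's proof is identical up to relabeling of the two points and of which one gets translated by $g$.
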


\begin{proof}
Let $|\iota| (\overline{q_1}) =|\iota| (\overline{q_2})$ in $|M/G|$ for $\overline{q_i} \in |N/G|$. Then,
$$\iota (q_1) = g \cdot \iota (q_2)$$
for some $g \in G$. Denote $\iota (q_1)$ by $p$. We have to find $h \in G$ such that $h \cdot q_1 = q_2$. Observe that
$$\iota (g \cdot q_2) = g \cdot \iota (q_2) = \iota (q_1) =p,$$
which implies that $g \cdot q_2$ and $q_1$ lie over the same fiber $\iota^{-1} (p)$ of $\iota$. Since $\iota$ is strong, there is $h' \in G_p$ such that $h' \cdot q_1 = g \cdot q_2$. By letting $h=g^{-1} h'$, we prove the claim.
\end{proof}

Next, we use the local model of strong $G$-equivariant immersion to construct an orbifold embedding. 
\begin{prop}\label{prop:eqem}
Let $\iota : N \to M$ be a strong $G$-equivariant immersion between two $G$-manifolds with locally free $G$-actions. Then, there exist orbifold groupoid representations $\mathcal{H}$ and $\mathcal{G}$ of $[N/G]$ and $[M/G]$ respectively so that $\iota$ induces an orbifold embedding $\phi_{\iota} : \mathcal{H} \to \mathcal{G}$ whose underlying map between quotient spaces is injective.
\end{prop}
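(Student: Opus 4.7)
The plan is to apply the slice-representation construction from the preceding proposition to both $M$ and $N$, choosing the slices for $N$ compatibly with those of $M$ so that each local chart of $\mathcal{H}$ packages the $|G_i/H_{y_i}|$ preimages of $x_i$ into the required homogeneous-bundle form. First, fix a slice cover of $M$ with data $(U_i, V_i, x_i, G_i, \psi_i : G \times_{G_i} V_i \xrightarrow{\cong} U_i)$ and let $\mathcal{G}$ be the resulting orbifold groupoid with $G_0 = \sqcup_i V_i$ and $G_1$ the evident pullback from $G \times M$.

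For each index $i$ with $\iota^{-1}(x_i) \ne \emptyset$, pick any $y_i \in \iota^{-1}(x_i)$; since $\iota$ is $G$-equivariant, $H_{y_i} := G_{y_i}$ is a subgroup of $G_i$. Using that $\iota$ is an immersion and the $G$-action on $N$ is locally free, take a slice $W_{y_i}$ at $y_i$ for the $G$-action on $N$, small enough that $\iota$ restricts to an immersion $W_{y_i} \hookrightarrow V_i$. The strong condition gives $\iota^{-1}(x_i) = G_i \cdot y_i \cong G_i/H_{y_i}$. Now define $\mathcal{H}$ with object space $H_0 := \sqcup_i (G_i \times_{H_{y_i}} W_{y_i})$ together with the projection $p_\mathcal{H}: H_0 \to N$, $p_\mathcal{H}[g,w] := g \cdot w$, and with $H_1$ the fibered product of $G \times N$ along $(p_\mathcal{H}, p_\mathcal{H})$. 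The argument of the slice-representation proposition shows $\mathcal{H}$ is an orbifold groupoid Morita-equivalent to $[N/G]$. Define $\phi_\iota$ on objects by $\phi_0[g, w] := g \cdot \iota(w) \in V_i$ (well-defined since $H_{y_i} \subseteq G_i$ and $\iota(W_{y_i}) \subseteq V_i$), and on arrows by the pushforward $G \times N \to G \times M$ induced by $\iota$; these commute with all structure maps by the $G$-equivariance of $\iota$.

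It remains to check the three conditions of Definition \ref{suborbifold}. Condition (1) holds because on each coset copy of $W_{y_i}$ inside $G_i \times_{H_{y_i}} W_{y_i}$, the map $\phi_0$ is $\iota|_{W_{y_i}}$ followed by the diffeomorphism of $V_i$ given by a $G_i$-translation, both immersions. Condition (2) is automatic from the construction: on a sufficiently small $G_i$-invariant neighborhood $U_{x_i} \subseteq V_i$ of $x_i$, the restriction $\mathcal{H}|_{\phi_0^{-1}(U_{x_i})}$ is the translation groupoid $G_i \ltimes (G_i \times_{H_{y_i}} W_{y_i})$, which is the local model \eqref{model}. Condition (3) reduces to Lemma \ref{underiota}, which yields injectivity of $|\phi_\iota|:|N/G|\to|M/G|$; properness follows if $\iota$ is proper, or after restricting to a suitable $G$-invariant open subset of $N$. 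The main obstacle is Condition (2): without the strong condition the $|G_i/H_{y_i}|$ coset copies of $W_{y_i}$ would not exhaust the preimages of $x_i$ in $N$, and one could at best achieve \eqref{model} up to Morita equivalence. Strong transitivity is precisely what collapses this ambiguity and lets the local model be realized on the nose, consistent with the earlier observation that orbifold embedding is not Morita invariant.
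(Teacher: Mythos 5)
Your overall strategy --- slice representations of both quotients, the strong condition to produce the local model $G_x \times_{H_y} V_y$, and Lemma \ref{underiota} for essential injectivity --- is the same as the paper's, which obtains the local model from a relative slice theorem (Lemma \ref{Slice theorem}) together with the product-type neighborhoods of Lemma \ref{locmodelstrong}. The genuine gap is in how you select charts. You build $H_0$ only from the pieces $G_i \times_{H_{y_i}} W_{y_i}$ attached to the slice centers $x_i$ of a fixed cover of $M$ (discarding indices with $\iota^{-1}(x_i)=\emptyset$). The $G$-saturation of the image of $H_0$ in $N$ is then only a neighborhood of the orbits $G\cdot \iota^{-1}(x_i)$: a point $q\in N$ whose image $\iota(q)$ lies in some $V_i$ but away from $x_i$, and which is not a preimage of any other center, need not lie in any $G\cdot W_{y_j}$. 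So your $\mathcal{H}$ need not be essentially surjective onto $[N/G]$, i.e.\ it may fail to be a representation of $[N/G]$ at all (degenerate but illustrative: $G$ trivial, $M=\mathbb{R}$ covered by the single slice $V_1=\mathbb{R}$ with $x_1=0$, and $N$ a point mapped to $1/2$; then $H_0$ is empty). For the same reason your verification of condition (2) of Definition \ref{suborbifold} takes place only at the centers $x_i$, whereas it must be checked at every $x\in im(\phi_0)$, including points of $V_i$ other than $x_i$, where the relevant isotropy group is $G_x\leq G_i$ and a fresh compatible pair of neighborhoods is required.

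The repair is what the paper actually does: for \emph{every} $p\in\iota(N)$ and every $q\in\iota^{-1}(p)$, produce a compatible pair $(\overline{U}_p,\overline{V}_q)$ with $\iota^{-1}(\overline{U}_p)\cong G_p\times_{G_q}\overline{V}_q$, and adjoin all of these slices to the representations of $[M/G]$ and $[N/G]$. The strong condition guarantees that the $\overline{V}_q$'s, ranging over all such $p$ and $q$, cover $N$ up to the $G$-action, so the resulting $\mathcal{H}$ really is Morita equivalent to $[N/G]$. Note also that the identity $\iota^{-1}(\overline{U}_p)= G_p\cdot \overline{V}_q$ is not purely a consequence of "sufficiently small $U_{x_i}$": it uses both transitivity of the $G_p$-action on $\iota^{-1}(p)$ and the dichotomy \eqref{totdisc} for translates of $\overline{V}_q$ under $G_p\setminus G_q$, which is the content of Lemma \ref{locmodelstrong} and which your sketch assumes silently. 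Your treatment of condition (3) (injectivity via Lemma \ref{underiota}, properness left implicit) matches the paper.
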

We will give a proof at the end of this section, after we discuss local models.
The following lemma is  an analogue of standard slice theorem.
\begin{lemma}\label{Slice theorem}
Let $M$ be a manifold on which a compact Lie group $G$ acts locally freely. Suppose $\iota : N \to M$ is a $G$-equivariant immersion. For $q \in N$ and $p=\iota(q) \in M$, we can find a $G$-invariant neighborhood $\WT{U}_p$ of $G \cdot p$ in $M$ and $\WT{V}_q$ of $G \cdot q$ in $N$ with the following properties:
\begin{enumerate}
\item[(i)] There are normal slices $U_p$ and $V_q$ to $G \cdot p$ and $G \cdot q$ at $p$ and $q$ respectively such that
\begin{equation}\label{slice}
\WT{U}_p \cong G \times_{G_p} U_p \quad \WT{V}_q \cong G \times_{G_q} V_q.
\end{equation}
\item[(ii)] There is an $G_q$-equivariant embedding $e : V_q \to U_p$ such that the diagram
\begin{equation*}
\xymatrix{
\WT{V}_q \ar[r]^{\iota} \ar[d]_{\cong}& \WT{U}_p \ar[d]^{\cong}\\
 G \times_{G_q} V_q \ar[r]^{[id,e]} & G \times_{G_p} U_p
}
\end{equation*}
commutes where the map on the second row is given by $(g,v) \mapsto (g, e(v))$.
\end{enumerate}
\end{lemma}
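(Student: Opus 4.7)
The plan is to reduce this to the standard $G$-equivariant tubular neighborhood theorem on $M$, and then construct $V_q$ in $N$ as a transverse preimage of the slice $U_p$, so that the embedding $e$ is literally the restriction $\iota|_{V_q}$.

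First I would observe that $G_q \le G_p$: for $g \in G_q$, $G$-equivariance of $\iota$ gives $g\cdot p = g\cdot\iota(q) = \iota(g\cdot q) = \iota(q) = p$. Next, fix a $G$-invariant Riemannian metric on $M$ (averaging over the compact group $G$), and apply the standard slice theorem at $p$: let $U_p\subset M$ be the image under $\exp_p$ of a small $G_p$-invariant ball in the normal subspace $T_p(G\cdot p)^\perp$, and set $\widetilde U_p := G\cdot U_p$, so that $\widetilde U_p \cong G\times_{G_p}U_p$ by the standard $G$-equivariant tubular neighborhood theorem. Because the action is locally free, $\dim T_p(G\cdot p) = \dim\mathfrak g = \dim T_q(G\cdot q)$, and $G$-equivariance forces $d\iota_q$ to map $T_q(G\cdot q)$ isomorphically onto $T_p(G\cdot p)$. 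Combined with injectivity of $d\iota_q$, this gives the transversality relation $d\iota_q(T_qN) + T_pU_p = T_pM$, so $\iota$ is transverse to $U_p$ at $q$.

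Now I would define $V_q := \iota^{-1}(U_p)\cap W$, where $W$ is a small $G_q$-invariant open neighborhood of $q$ in $N$ on which $\iota$ restricts to an honest embedding. By transversality, $V_q$ is a smooth submanifold of $N$ through $q$ of codimension $\dim(G\cdot p) = \dim(G\cdot q)$, and it is $G_q$-invariant because both $U_p$ (being $G_p$-invariant) and $W$ are. A tangent space count shows $T_qV_q \oplus T_q(G\cdot q) = T_qN$, so $V_q$ is in fact a normal slice to the orbit $G\cdot q$ at $q$. Applying the ordinary slice theorem on $N$ to this slice yields $\widetilde V_q := G\cdot V_q \cong G\times_{G_q}V_q$, establishing~(i) on the $N$-side.

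For~(ii) I set $e := \iota|_{V_q}:V_q\to U_p$, which is an embedding (since $\iota|_W$ is) and is $G_q$-equivariant since $\iota$ is $G$-equivariant and $G_q$ preserves both $V_q$ and $U_p$. The commutativity of the diagram is then immediate: under the slice isomorphisms, a class $[g,v]\in G\times_{G_q}V_q$ corresponds to $g\cdot v\in\widetilde V_q$, and $\iota(g\cdot v) = g\cdot\iota(v) = g\cdot e(v)$, which is represented by $[g,e(v)]\in G\times_{G_p}U_p$. The step I expect to require the most care is the verification that $V_q$ is actually a slice in the technical sense (so that $G\times_{G_q}V_q\to \widetilde V_q$ is a diffeomorphism and not merely a surjective local diffeomorphism), but after shrinking $V_q$ inside a sufficiently small $G$-invariant tube around $G\cdot q$, this follows from the standard slice theorem applied on $N$.
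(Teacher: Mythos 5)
Your proof is correct, and its overall strategy coincides with the paper's: both reduce to the standard $G$-equivariant slice theorem on $M$ (via a $G$-invariant metric and the exponential map) and then manufacture $V_q$ as a preimage of the slice $U_p$. The one genuine difference is in how that preimage, and hence $e$, is taken. The paper works at the linear level: it splits $\iota_\ast = (\iota_\ast^O,\iota_\ast^N) : T_q(G\cdot q)\oplus W' \to T_p(G\cdot p)\oplus W$, defines $V_q$ as the preimage of $U_p$ under the linear map $\iota_\ast^N$, and takes $e$ to be (the restriction of) $\iota_\ast^N$ itself, implicitly identifying slices with disks in the normal spaces via exponential maps. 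You instead define $V_q := \iota^{-1}(U_p)\cap W$ directly in $N$ using the transversality $d\iota_q(T_qN)+T_pU_p=T_pM$, and take $e=\iota|_{V_q}$. Your version buys a cleaner verification of (ii): with $e=\iota|_{V_q}$ the diagram commutes tautologically from $\iota(g\cdot v)=g\cdot e(v)$, whereas the paper's linearized $e$ requires one to absorb the discrepancy between $\iota\circ E_N$ and $E_M\circ\iota_\ast$ into the choice of coordinates. Both arguments defer the same final point to the standard slice theorem, namely that the $G_q$-invariant transversal $V_q$ (with $T_qV_q\oplus T_q(G\cdot q)=T_qN$) becomes an honest slice after shrinking, so that $G\times_{G_q}V_q\to \WT{V}_q$ is a diffeomorphism rather than just a surjective local diffeomorphism; your explicit flagging of this as the delicate step, together with the observation $G_q\le G_p$, matches the paper's reliance on the argument of Theorem B.24 in \cite{GGK}.
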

\begin{proof}
This is a relative version of the slice theorem (see for example Theorem B.24 of \cite{GGK}). We briefly sketch the construction of the slice here. Fix any $G$-invariant metric $\xi$ on $M$. The exponential map $E$ identifies a neighborhood of $p$ in $M$ with a neighborhood of $0$ in $T_p M$. Moreover, $\xi$ induces a decomposition
\begin{equation}\label{nordecomp}
T_p M \cong T_p (G \cdot p ) \oplus W
\end{equation}
where $W$ is normal to the orbit and hence, it is equipped with the linear $G_p$ action (coming from the one on $T_p M / T_p (G \cdot p)$. Let $U_p \subset W$ be a $G_p$-invariant small disk in $W$ around the origin on which $E$ is a diffeomorphism. Now,
$$\psi : G \times_{G_p} U_p \to M, \quad [g,u] \mapsto g \cdot E(u)$$
is well defined and $G$-equivariant. Since $\psi$ is a local diffeomorphism at the point $[e,o]$, $G$-equivariance implies that it is a local diffeomorphism at all points of the form $[g,0]$. One can check that $\psi$ is indeed injective if $U_p$ is sufficiently small. (See the proof of Theorem B.24 in \cite{GGK} for details.)

To get the relative version, we pull back $\xi$ to $N$ by $\iota$. Since $\iota$ is an immersion, $\iota^\ast \xi$ gives a metric on $N$. From the $G$-equivariant injection $T_q N \stackrel{\iota_\ast}{\longrightarrow} T_p M$, we can choose a decomposition compatible with \eqref{nordecomp}:
$$T_q N \cong T_p (G \cdot q) \oplus W'$$
 i.e. $\iota_\ast$ is decomposed as
$$\iota_\ast=(\iota_\ast^O, \iota_\ast^N) :T_q (G \cdot q ) \oplus W' \to T_p (G \cdot p) \oplus W.$$
Note that $G_q \subset G_p$ and $\iota_\ast^N$ is $G_q$-equvariant. Let $V_q$ be the inverse image of $U_p$ by $\iota_\ast^N$. We may assume that $G \times_{G_q} V_q$ is diffeomorphic to a neighborhood of $G \cdot q$ by shrinking $U_p$ if necessary. Thus, we proved (i).

Finally, by letting $e$ the restriction of $\iota_\ast^N$ to $V_q$, we get (ii).
\end{proof}

\begin{figure}[h]
\begin{center}
\includegraphics[height=1.7in]{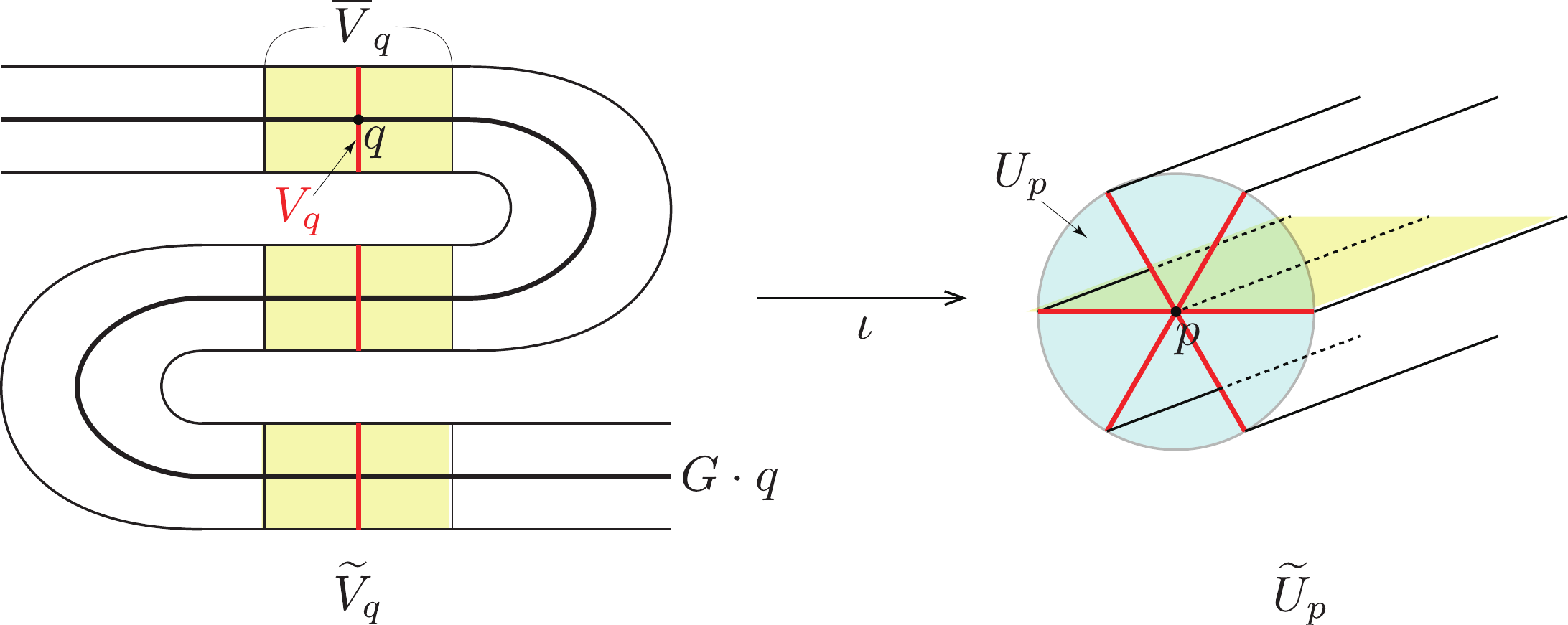}

\label{morita}
\caption{Slices}\label{stimmer}
\end{center}
\end{figure}

The following lemma provides the local model which is needed for the orbifold embedding.
\begin{lemma}\label{locmodelstrong}
Under the setting of Lemma \ref{Slice theorem}, assume further that $\iota$ is strong. Then, we can find a $G_p$-invariant neighborhood $\overline{U}_p(\subset \WT{U}_p)$ of $p$ in $M$ and a $G_q$-invariant neighborhood $\overline{V}_q (\subset \WT{V}_q)$ of $q$ in $N$ such that there is a $G_p$-equivariant isomorphism
\begin{equation}\label{simmer}
\iota^{-1} (\overline{U}_p) \cong G_p \times_{G_q} \overline{V}_q
\end{equation}
where the $G_q$-action on $G_p \times \overline{V}_q$ is given by
$$h \cdot (g,x) = (g h^{-1}, h \cdot x).$$
\end{lemma}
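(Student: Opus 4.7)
The plan is to combine the slice-theorem data from Lemma \ref{Slice theorem} with the strong equivariance hypothesis. First, using Lemma \ref{Slice theorem}, I identify $\WT{U}_p \cong G \times_{G_p} U_p$ and $\WT{V}_q \cong G \times_{G_q} V_q$, under which $\iota$ becomes $[g, v] \mapsto [g, e(v)]$ for the $G_q$-equivariant embedding $e : V_q \to U_p$. I also identify the normal slice $U_p$ as the subset $\{[e, u] : u \in U_p\} \subset \WT{U}_p$ (the zero section of the bundle $\WT{U}_p \to G/G_p$), and similarly $V_q \subset \WT{V}_q$. Note that $G_q \subset G_p$: if $h \in G_q$, then $h \cdot p = h \cdot \iota(q) = \iota(h \cdot q) = \iota(q) = p$.

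The key computation is direct: a class $[g, u_0] \in G \times_{G_p} U_p$ lies in $U_p$ (i.e.\ has a representative with first coordinate $e$) if and only if $g \in G_p$, in which case $[g, u_0] = [e, g \cdot u_0]$. Applied to $\iota[g, v] = [g, e(v)]$, this yields
\begin{equation*}
\iota^{-1}(U_p) \cap \WT{V}_q = \{[g, v] \in G \times_{G_q} V_q : g \in G_p \}.
\end{equation*}
Since $G_q \subset G_p$, the natural inclusion $G_p \times V_q \hookrightarrow G \times V_q$ descends to an injection
\begin{equation*}
\Psi : G_p \times_{G_q} V_q \hookrightarrow G \times_{G_q} V_q = \WT{V}_q,
\end{equation*}
with the source formed using the $G_q$-action $h \cdot (g, v) = (gh^{-1}, hv)$ stated in the lemma. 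The image of $\Psi$ is precisely $\iota^{-1}(U_p) \cap \WT{V}_q$, and $\Psi$ intertwines the $G_p$-actions (left multiplication on the $G_p$-factor of the source corresponds to the restriction to $G_p \subset G$ of the $G$-action on $\WT{V}_q$). This gives the desired local model on the part of the preimage lying in $\WT{V}_q$.

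The remaining step, which I expect to be the main obstacle, is to shrink the slices so that $\iota^{-1}(\overline{U}_p)$ is \emph{entirely} contained in $\WT{V}_q$, ruling out stray preimages from other sheets of $N$. Here the strong equivariance enters crucially: by strongness, $\iota^{-1}(p) = G_p \cdot q$ is a single $G_p$-orbit, which is finite (as $G$ is compact and acts locally freely, so $G_p$ is finite) and already contained in $\WT{V}_q$. A compactness/accumulation argument then shows that for a shrinking family $\overline{U}_{p,n} \downarrow \{p\}$, one has $\iota^{-1}(\overline{U}_{p,n}) \subset \WT{V}_q$ for sufficiently large $n$: otherwise, extracting points $x_n \in \iota^{-1}(\overline{U}_{p,n}) \setminus \WT{V}_q$ would produce, in the limit, a preimage of $p$ outside $\WT{V}_q$, contradicting $\iota^{-1}(p) \subset \WT{V}_q$. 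Taking such a small $G_p$-invariant $\overline{U}_p \subset U_p$ and setting $\overline{V}_q := e^{-1}(\overline{U}_p) \subset V_q$ (a $G_q$-invariant neighborhood of $q$), the computation of the previous paragraph upgrades to a $G_p$-equivariant isomorphism $\iota^{-1}(\overline{U}_p) \cong G_p \times_{G_q} \overline{V}_q$, as claimed. What makes the shrinking argument tractable, rather than requiring global properness of $\iota$, is precisely the strong condition: it reduces the global question to the finiteness and containment of $\iota^{-1}(p)$ inside the slice.
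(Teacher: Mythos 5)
Your core computation --- that inside the slice model $\WT{V}_q \cong G\times_{G_q}V_q$ the locus $\{[g,v] : g\in G_p\}$ is exactly the part of $\iota^{-1}(U_p)$ lying in $\WT{V}_q$, and that it is $G_p$-equivariantly a copy of $G_p\times_{G_q}V_q$ --- is correct and is essentially the same mechanism the paper uses. But there are two gaps. First, your $\overline{U}_p$ is taken inside the normal slice $U_p$, and when $\dim G>0$ the slice has codimension $\dim G$ in $M$ (the action being locally free), so $\overline{U}_p\subset U_p$ is \emph{not} a neighborhood of $p$ in $M$, which is what the lemma asserts. The paper repairs exactly this by thickening in the orbit direction: it chooses a neighborhood $O_q$ of $[e]$ in $G/G_q$ small enough that $g\cdot O_q\cap O_q=\emptyset$ for all $g\in G_p\setminus G_q$ (condition \eqref{propdisc}), lets $O_p$ be its image in $G/G_p$, and takes $\overline{V}_q$, $\overline{U}_p$ to be the parts of the tubular neighborhoods lying over $O_q$, $O_p$. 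That disjointness is what makes $\overline{V}_q\to\overline{U}_p$ an embedding and yields the dichotomy \eqref{totdisc} ($g\cdot\overline{V}_q$ equals $\overline{V}_q$ for $g\in G_q$ and is disjoint from it for $g\in G_p\setminus G_q$), which plays the role of your ``$[g,e(v)]$ lands in the slice iff $g\in G_p$'' computation. Your argument does survive this thickening, but the choice of $O_q$ satisfying \eqref{propdisc} is a step you cannot skip.

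Second, your shrinking argument to exclude stray preimages is not justified as written: to extract a limit from points $x_n\in\iota^{-1}(\overline{U}_{p,n})\setminus\WT{V}_q$ you need the $x_n$ to accumulate somewhere in $N$, and that is precisely a properness statement about $\iota$ near $p$. Strongness controls only the fiber $\iota^{-1}(p)$ itself and does not prevent the $x_n$ from escaping to infinity in $N$ (for instance, a second sheet of $N$ immersed so as to approach $p$ without containing a preimage of $p$). So your closing claim that the strong condition lets you dispense with properness is not right. In fairness, the paper's own proof makes the same silent assumption when it asserts that $\iota^{-1}(\overline{U}_p)$ is exactly the union of the $G_p$-translates of $\overline{V}_q$; in the paper's intended applications properness comes from condition (3) of Definition \ref{suborbifold}, but as a statement about an arbitrary strong equivariant immersion this point requires an explicit hypothesis or argument in both your proof and theirs.
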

\begin{proof}
We take a {\it product type} neighborhood $\overline{V}_q(\subset \WT{V}_q)$ and $\overline{U}_p (\subset \WT{U}_p)$ of $q$ and $p$ as follows. We first identify the orbit $G \cdot q$ with $G / G_q$ where $q$ corresponds to the image of identity $[e]$ in $G /G_q$ and $G \cdot p$ with $G / G_p$ in a similar way. Then the tubular neighborhood $G \times_{G_q} V_q$ can be regarded as a fiber bundle over $G / G_q$. Take an open neighborhood $O_q$ of $[e]$ in $G / G_q$ which is invariant under the left $G_q$-action on $G / G_q$. As $[e]$ is fixed by this $G_q$-action, one can for instance choose left $G_q$-invariant metric on $G / G_q$ and then, take $O$ to be a small open ball around $[e]$. We may assume $O$ is small enough so that 
\begin{equation}\label{propdisc}
g \cdot O_q \cap O_q = \phi
\end{equation}
for nontrivial $g \in G_p \setminus G_q$. (Note that $G_p$ also acts on $G/ G_q$ from the left.) This is possible since $G_p$ is finite. Let $O_p$ be the image of $O_q$ by the map $G / G_q \to G / G_p$. \eqref{propdisc} implies that the map $O_q \to O_p$ is an embedding. 

Finally, we define $\overline{V}_q$ and $\overline{U}_p$ to be open neighborhoods of $q$ and $p$, respectively, such that the following diagrams are cartesian. (See Figure \ref{stimmer}.)
\begin{equation}
\xymatrix{ \overline{V}_q \ar[d] \ar@{^{(}->}[r] & G \times_{G_q} V_q \ar[d] & \overline{U}_p \ar[d] \ar@{^{(}->}[r] & G \times_{G_p} V_p \ar[d]  \\ 
O_q \ar@{^{(}->}[r] & G / G_q & O_p \ar@{^{(}->}[r] & G / G_p 
}
\end{equation}
Then, by \eqref{propdisc} we have
\begin{equation}\label{totdisc}
g \cdot \overline{V}_q =\left\{
\begin{array}{lc}
\overline{V}_q & \mbox{if}\,\,\, g \in G_q \\
\mbox{disjoint from}\,\, \overline{V}_q & \mbox{if}\,\,\, g \in G_p \setminus G_q
\end{array}\right.
\end{equation}
(More precisely, $\overline{V}_q$ and $\overline{U}_p$ are image of these fiber products under the isomorphisms shown in (i) of the previous lemma.)
Observe that $\iota|_{\overline{V}_q} : \overline{V}_q \to \overline{U}_p$ is an embedding since both $O_q \to O_p$ and $V_q \to U_p$ are embeddings. 

Since the $G_p$-action on $\iota^{-1} (p)$ is transitive, there is $|G_p| /|G_q|$-open subsets of $N$ (isomorphic to $\overline{V}_q$) which are mapped to $\overline{U}_p$. By \eqref{totdisc}, $\iota^{-1} (\overline{U}_p)$ is the disjoint union of these open subsets of $N$.

Now, define $\tilde{\phi} : G_p \times \overline{V}_q \to \iota^{-1} (\overline{U}_p)$ by
\begin{equation*}
\begin{array}{ccc}
\tilde{\phi} : G_p \times \overline{V}_q &\longrightarrow& \iota^{-1} (\overline{U}_p)  \\
(g,x) &\longmapsto& g \cdot x 
\end{array}
\end{equation*}
This map is well defined because $\iota$ is $G_p$-equivariant and $\overline{U}_p$ is $G_p$-invariant subset of $M$. Furthermore, $\tilde{\phi}$ is surjective (and hence a submersion) by the strong condition of $\iota$. It remains to show that it is injective up to $G_q$-action.

Suppose $\tilde{\phi}$ sends $(g,x)$ and $(g',x')$ in $G_p \times \overline{V}_q$ to the same point in $\iota^{-1} (\overline{U}_p)$. Then $g \cdot x = g' \cdot x'$, equivalently $\left(g'\right)^{-1}g \cdot x = x'$. Note that both $x$ and $x'$ belong to $\overline{V}_q$ and $\left(g'\right)^{-1}g \in G_p$. From the dichotomy \eqref{totdisc}, we have $\left(g'\right)^{-1}g =h$ for some $h \in G_q$. Therefore, $g' = g h^{-1}$ and $x' = h\cdot x$ for $h \in G_q$. We conclude that $\tilde{\phi}$ is indeed a principal $G_q$-bundle and the isomorphism \eqref{simmer} follows.
\end{proof}

\begin{remark}
Note that the induced map $\phi : G_p \times_{G_q} \overline{V}_q \to \iota^{-1} (\overline{U}_p)$ is $G_p$-equivariant by definition.
\end{remark}

\begin{proof}[Proof of proposition {\rm \ref{prop:eqem}.}]
Suppose we have a strong $G$-equivariant immersion $\iota : N \to M$. By equivariance, $\iota$ induces a map $\phi_{\iota}' : [N/G] \to [M/G]$ and $|\phi_{\iota}'| : N/G \to M/G$ is clearly injective. 

Consider a point $\bar{p}$ in $|M/G|$ and let $\pi_M (p) = \bar{p}$ for the quotient map $\pi_M : M \to |M/G|$. From the definition of strong equivariant immersion, the group action on $\iota^{-1} (p)$ is transitive. If $q \in N$ maps to $p$, then there exists a $G_p$-invariant product type  neighborhood $\overline{U}_p$ of $p$ in $M$ and $G_q$-invariant $\overline{V}_q$ of $q$ in $N$ which satisfies \eqref{simmer} from the lemma \ref{locmodelstrong}. We add all such slices $U_p$ and $V_q$ into the slice representations of $[N/G]$ and $[M/G]$ to get orbifold groupoids $\mathcal{H}$ and $\mathcal{G}$.

Now the previous remark implies that
$$\mathcal{H}|_{\phi_0^{-1} (U_p)} \cong G_p \ltimes \left( G_p \times_{G_q} V_q \right) .$$ Essential injectivity of the resulting orbifold morphism follows directly from Lemma \ref{underiota}. Since the other conditions in Definition \ref{suborbifold} are automatic, we get an orbifold embedding $\phi : \mathcal{H} \to \mathcal{G}$. 
\end{proof}

Section \ref{conv} will be devoted to prove the converse of this proposition.

\section{Hilsum-Scandalis maps}
We give brief review on Hilsum-Skandalis map, and we refer readers to \cite{PS} and \cite{L} for further details.
We first recall  the definition of the action of a orbifold groupoid on manifolds
\begin{definition}
Let $\mathcal{G}$ be an orbifold groupoid. A left $\mathcal{G}$-space is a manifold $E$ equipped with an action by $\mathcal{G}$. Such an action is given by two maps:
\begin{itemize}
\item
an anchor $\pi : E \to G_0$;
\item
an action $\mu : G_1 \times_{G_0} E \to E$.
\end{itemize}
The latter map is defined on pairs $(g,e)$ with $\pi(e)=s(g)$, and written $\mu (g,e) = g \cdot e$. It satisfies the usual identities for an action:
\begin{itemize}
\item
$\pi(g \cdot e) = t(g)$;
\item
$1_x \cdot e = e$;
\item
$g \cdot (h \cdot e) = (gh) \cdot e.$
\end{itemize}
for $x \stackrel{h}{\longrightarrow} y \stackrel{g}{\longrightarrow} z$ in $G_1$ with $\pi(e)=x$.
\end{definition}
A right $\mathcal{G}$-space is the same thing as a left $\mathcal{G}^{op}$-space, where $\mathcal{G}^{op}$ is the opposite groupoid obtained by exchanging the roles of the target and source maps.

\begin{definition}
A left $\CG$-bundle over a manifold $M$ is a manifold $R$ with smooth maps
\begin{equation*}
\begin{diagram}
\node{ R } \arrow{s,r}{r}\arrow{e,t}{\rho} \node{M} \\
\node{ G_0 }
\end{diagram}
\end{equation*}
and a left $\CG$-action $\mu$ on $R$, with anchor map $r: R \to G_0$, such that $\rho(gx) = \rho(x)$ for any $x \in R$ and any $g \in G_1$ with $r(x) = s(g)$.

Such a bundle $R$ is principal if
\begin{enumerate}
\item $\rho$ is a surjective submersion,\\
\item the map $(\pi_1, \mu): R \times_{r, G_0, s} G_1 \to R \times_M R$, sending $(x,g)$ to $(x,gx)$, is a diffeomorphism.
\end{enumerate}
\end{definition}

\begin{definition}
A Hilsum-Scandalis map $\CG \to \CH$ is represented by a principal left $\CH$-bundle $R$ over $G_0$
\begin{equation*}
\begin{diagram}
\node{ R } \arrow{s,r}{r}\arrow{e,t}{\rho} \node{G_0} \\
\node{ H_0 }
\end{diagram}
\end{equation*}
which is also a right $\CG$-bundle (over $H_0$), and the right $\CG$-action commutes with the $\CH$-action. $R$ is called the Hilsum-Scandalis bibundle.
\end{definition}

\begin{definition}
For two bibundles $R : \CG \to \CH$ and $Q : \CH \to \mathcal{K}$, their composition is defined by the quotient of the fiber product $Q \times_{H_0} R$ by the action of $\CH$:
\begin{equation}
	Q \circ R := (Q \times_{H_0} R) / H_1,
\end{equation}
where the action of $H_1$ on $Q \times_{H_0} R$ is given by $h \cdot (q,r) := (qh, h^{-1}r)$.
Since the left action of $\CH$ on $R$ is principal, the action of $\CH$ on $Q \times_{H_0} R$ is free and proper; hence, the $Q \circ R$ is a smooth manifold. It also admits a principal $\mathcal{K}$-bundle structure with a right $\CG$-action, because $\CH$-action commutes with $\CG$- and $\mathcal{K}$-actions on $R$ and $Q$, respectively. 
\end{definition}
One can compose two Hilsum-Scandalis maps as follows:
\begin{definition}
Two Hilsum-Scandalis maps $P, R : \CG \to \CH$ are isomorphic if they are diffeomorphic as left $\CH$- and right $\CG$-bundles: i.e, there is a diffeomorphism $\alpha : P \to R$ satisfying $\alpha(h \cdot p \cdot g) = h \cdot \alpha(p) \cdot g$ for all $(h, p, g) \in H_1 \times_{H_0} P \times_{G_0} G_1$.
\end{definition}

For example, any Lie groupoid homomorphism $\phi : \CG \to \CH$ defines a Hilsum-Scandalis map
\begin{equation*}
\begin{diagram}
\node{ R_\phi := H_1 \times_{s, H_0, \phi_0} G_0} \arrow{s,r}{t \circ \pi_1}\arrow{e,t}{\pi_2} \node{G_0} \\
\node{ H_0 }
\end{diagram}
\end{equation*}
where $\pi_1$ and $\pi_2$ are the projection maps. One can easily check that $\pi_2$ is principal in this case. We will use this construction crucially in the next section to construct an equivariant immersion from an orbifold embedding.

\begin{remark}$\left. \right.$
\begin{enumerate}
\item
Not every Hilsum-Scandalis map is induced from Lie groupoid homomorphisms. In fact, a Hilsum-Scandalis map $R : \CG \to \CH$ is isomorphic to some $R_{\phi}$ for some Lie groupoid homomorphism $\phi : \CG \to \CH$ if and only if the map $\rho : R \to G_0$ has a global section. See Lemma 3.36 in \cite{L}.
\item 
We use slightly different notion of the Hilsum-Skandalis map from \cite{PS}. In \cite{PS},
 $$R_\phi^{ps} = H_0 \times_{\phi_0,G_0,t} G_1$$
is used to construct a Hilsum-Skandalis map from $\phi$. Here, we use $R_\phi^\ast$ to make it a left $G$-space. See the following diagrams.
\begin{equation*}
\xymatrix{ 
   R_\phi \ar[r]^{\pi_1} \ar[d]_{\pi_2}  & G_1 \ar[d]^{s} && R_\phi^{ps} \ar[r]\ar[d] & H_0 \ar[d]^{\phi_0}\\
   H_0 \ar[r]^{\phi_0}   & G_0 && G_1 \ar[r]^{t}   & G_0 } 
\end{equation*}
\end{enumerate}
\end{remark}

Now, we want to translate the notion of equivalence in the category of orbifold groupoids into Hilsum-Scandalis maps. We first refer to the following two lemmas from \cite{L}.

\begin{lemma}(\cite{L}, Lemma 3.34)
A Lie groupoid homomorphism $\phi : \CG \to \CH$ is an equivalence of Lie groupoids if and only if the corresponding $R_\phi$ is $\CG$-principal. 
\end{lemma}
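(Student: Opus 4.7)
The plan is to verify the equivalence axiom-by-axiom by unpacking both sides and matching them up. Recall that $R_\phi = H_1 \times_{s, H_0, \phi_0} G_0$ carries the anchor $t \circ \pi_1 : R_\phi \to H_0$, the moment map $\pi_2 : R_\phi \to G_0$, and the right $\CG$-action $(h,y) \cdot g = (h\phi_1(g), s(g))$ whenever $t(g)=y$.

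First I would observe that the requirement in the definition of a principal bundle that the anchor $t \circ \pi_1 : R_\phi \to H_0$ be a surjective submersion is literally the essential-surjectivity condition (i) of Definition \ref{equivalence} applied to $\phi : \CG \to \CH$. Thus this part matches trivially, and the proof reduces to showing that the action/freeness axiom --- i.e.\ that the shear map is a diffeomorphism --- is equivalent to the fiber-product condition (ii).

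Next I would analyze the shear map
$$\alpha : R_\phi \times_{\pi_2, G_0, t} G_1 \longrightarrow R_\phi \times_{H_0} R_\phi, \qquad ((h, t(g)),\, g) \longmapsto \bigl((h,t(g)),\, (h\phi_1(g), s(g))\bigr).$$
For the forward direction, given $((h_1, y_1), (h_2, y_2))$ with $t(h_1)=t(h_2)$ on the target side, the unique candidate for an $\alpha$-preimage is $((h_1, y_1),\, g)$ where $g \in G_1$ must satisfy $\phi_1(g) = h_1^{-1}h_2$, $s(g) = y_2$, and $t(g) = y_1$. Condition (ii) supplies such a $g$ uniquely and smoothly once one notes that $(\phi_0 \times \phi_0)(y_2, y_1) = (s,t)(h_1^{-1}h_2)$, which is automatic from the defining fiber-product conditions on $R_\phi$. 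Smoothness of $\alpha^{-1}$ then follows from the universal property of the cartesian square in (ii).

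For the reverse direction, I would recover (ii) from the hypothesis that $\alpha$ is a diffeomorphism by restricting to the slice where $h_1$ is a unit: pairs of the form $((1_{\phi_0(y_1)}, y_1), (h, y_2)) \in R_\phi \times_{H_0} R_\phi$ parametrize arrows $h \in H_1$ with prescribed source $\phi_0(y_2)$ and target $\phi_0(y_1)$, and $\alpha^{-1}$ assigns to each such $h$ a unique smooth lift $g \in G_1$ with $\phi_1(g) = h$, $s(g) = y_2$, $t(g) = y_1$ --- exactly the universal property (ii) demands. The main technical obstacle sits in this reverse step: one must check that the reconstruction is \emph{smooth} rather than merely set-theoretic, which requires observing that the unit-slice is the image of the smooth unit section $u : H_0 \to H_1$ pulled back into $R_\phi \times_{H_0} R_\phi$, hence an embedded submanifold, so that the restriction of $\alpha^{-1}$ genuinely realises the square of (ii) as a cartesian diagram in the category of smooth manifolds.
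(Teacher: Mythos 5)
The paper offers no proof of this statement: it is quoted from Lerman \cite{L} (Lemma 3.34) and used as a black box, so there is no in-paper argument to measure yours against. On its own terms your verification is correct and is the standard unwinding. The identification of condition (i) of Definition \ref{equivalence} (with the roles of $\CG$ and $\CH$ interchanged, since here the homomorphism runs $\CG\to\CH$) with surjective submersivity of the anchor $t\circ\pi_1:R_\phi=H_1\times_{s,H_0,\phi_0}G_0\to H_0$ is indeed a tautology, and your matching of the shear map with condition (ii) is the right mechanism: the equation $h_1\phi_1(g)=h_2$ forces $\phi_1(g)=h_1^{-1}h_2$, whose source and target are $\phi_0(y_2)$ and $\phi_0(y_1)$ by the defining relations of $R_\phi$, so existence, uniqueness and smooth dependence of $g$ are exactly the cartesian property of the square, and conversely.

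Two points deserve to be made explicit. First, the codomain $R_\phi\times_{H_0}R_\phi$ of the shear map is only known to be a manifold once $t\circ\pi_1$ is a submersion; this is harmless because that condition is present on both sides of the equivalence being proved, but it should be said before one speaks of $\alpha$ being a diffeomorphism. Second, in the reverse direction the cleanest way to see that the square of (ii) is cartesian \emph{in the category of manifolds} is to verify the universal property directly rather than via the unit slice: given smooth maps $f:Z\to H_1$ and $(g_1,g_2):Z\to G_0\times G_0$ with $(s,t)\circ f=(\phi_0\times\phi_0)\circ(g_1,g_2)$, the assignment $z\mapsto \left((1_{\phi_0(g_2(z))},g_2(z)),(f(z),g_1(z))\right)$ is a smooth map into $R_\phi\times_{H_0}R_\phi$, and composing with $\alpha^{-1}$ and the projection to $G_1$ yields the required smooth factorization $g:Z\to G_1$ with $\phi_1\circ g=f$ and $(s,t)\circ g=(g_1,g_2)$, unique by injectivity of $\alpha$. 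This subsumes your slice argument (the slice is the case where $Z$ is the set-theoretic fibre product itself) and spares you the separate check that the slice is an embedded submanifold, which, while true (it is the preimage of the image of the section $y\mapsto(1_{\phi_0(y)},y)$ of $\pi_2$ under the submersion $pr_1$), is the one place your write-up leans on an assertion rather than an argument.
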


\begin{lemma}(\cite{L}, Lemma 3.37)
Let $P : \CG \to \CH$ be a Hilsum-Scandalis map. Then, there is a cover $\phi : \mathcal{U} \to G_0$ and a groupoid homomorphism $f : \phi^*\CG \to \CH$ so that
\begin{equation*}
P \circ R_{\tilde\phi} \stackrel{\simeq}{\longrightarrow} R_f,
\end{equation*}
where $\tilde\phi : \phi^*\CG \to \CG$ is the induced functor and ``$\stackrel{\simeq}{\longrightarrow}$" an isomorphism of Hilsum-Scandalis maps. Here, $\phi^\ast \CG$ is the Lie groupoid with $\left(\phi^\ast \CG \right)_0 = \mathcal{U}$
and $\left(\phi^\ast \CG \right)_1$ given by
\begin{equation*}
\begin{diagram}
\node{\left(\phi^\ast \CG \right)_1} \arrow{s,l}{(s,t)} \arrow{e} \node{G_1} \arrow{s,r}{(s,t)}\\
\node{\mathcal{U} \times \mathcal{U}} \arrow{e,b}{(\phi,\phi)} \node{G_0 \times G_0}
\end{diagram}
\end{equation*}
\end{lemma}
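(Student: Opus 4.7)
The plan rests on the observation that $\rho_P : P \to G_0$, being the projection of a principal left $\CH$-bundle, is a surjective submersion and hence admits smooth local sections. I will combine such sections into a single global section over a suitable cover, and then use this section to produce both $f$ and the bibundle isomorphism in one stroke. First I pick an open cover $\{U_i\}_{i \in I}$ of $G_0$ together with smooth sections $\sigma_i : U_i \to P$ of $\rho_P$, set $\mathcal{U} := \bigsqcup_{i \in I} U_i$, and take $\phi : \mathcal{U} \to G_0$ to be the tautological surjective submersion. The $\sigma_i$ then assemble into a single smooth map $\sigma : \mathcal{U} \to P$ with $\rho_P \circ \sigma = \phi$.

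Next I construct the functor $f : \phi^*\CG \to \CH$. On objects, set $f_0(u) := r_P(\sigma(u))$, where $r_P : P \to H_0$ is the left $\CH$-anchor. For a morphism $(u, g, u') \in (\phi^*\CG)_1$ (so $s(g) = \phi(u)$ and $t(g) = \phi(u')$), the two points $\sigma(u) \cdot g$ and $\sigma(u')$ of $P$ lie in the same $\rho_P$-fiber, so principality of the left $\CH$-action yields a unique arrow $h \in H_1$ with $h \cdot \sigma(u') = \sigma(u) \cdot g$; set $f_1(u, g, u') := h$. Invariance of $r_P$ under the right $\CG$-action forces $s(h) = f_0(u')$ and $t(h) = f_0(u)$, so the arrow is well-formed. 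Preservation of identities and composites then follows at once from the uniqueness clause in the principality axiom, combined with associativity of the two actions on $P$ and the fact that they commute; smoothness of $f_1$ is inherited from that of $\sigma$ via the diffeomorphism $(\pi_1, \mu) : P \times_{r_P, H_0, s} H_1 \to P \times_{G_0} P$ that is part of principality.

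For the promised isomorphism, unpack $R_{\tilde\phi} = G_1 \times_{s, G_0, \phi} \mathcal{U}$, $R_f = H_1 \times_{s, H_0, f_0} \mathcal{U}$, and $P \circ R_{\tilde\phi} = (P \times_{G_0} R_{\tilde\phi})/G_1$. I define
\[ \alpha : P \times_{G_0} R_{\tilde\phi} \longrightarrow R_f, \qquad (p, (g, u)) \longmapsto (h, u), \]
where $h \in H_1$ is the unique arrow obtained by principality so that $h \cdot \sigma(u)$ equals the transport of $p$ along $g$ into the $\rho_P$-fiber over $\phi(u)$. A direct check shows $\alpha$ is constant on $G_1$-orbits (so descends to $P \circ R_{\tilde\phi}$), is simultaneously left $\CH$- and right $\phi^*\CG$-equivariant, and is a diffeomorphism with explicit inverse sending $(h, u) \in R_f$ to the class of $(h \cdot \sigma(u), (1_{\phi(u)}, u))$.

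The one genuine obstacle is bookkeeping: keeping the sides of the two actions, the source/target conventions for the various arrows, and the anchor maps of the bibundles aligned throughout the verifications of functoriality of $f$ and of bibundle equivariance of $\alpha$. Once a convention for the right $\CG$-action on $P$ is fixed, all the compatibilities reduce to mechanical chases using principality, associativity, and commutativity of the left $\CH$- and right $\CG$-actions on $P$; no ingredient beyond these and the local existence of smooth sections of a surjective submersion is required.
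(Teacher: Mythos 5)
The paper gives no proof of this lemma; it is quoted verbatim from Lerman \cite{L} (Lemma 3.37), so there is no in-paper argument to compare against. Your proposal is the standard proof of that result and is essentially correct: local sections of the surjective submersion $\rho_P$ assembled over a cover $\mathcal{U}$, $f_0 = r_P\circ\sigma$, $f_1$ via the uniqueness clause of principality, and the explicit bibundle isomorphism with inverse $(h,u)\mapsto [h\cdot\sigma(u),(1_{\phi(u)},u)]$. One caveat on the bookkeeping you flag: as written, your two uses of the right $\CG$-action are mutually inconsistent (in defining $f_1$ you let $g$ carry $\sigma(u)$ from the $s(g)$-fiber to the $t(g)$-fiber, while in defining $\alpha$ you transport $p$ from the $t(g)$-fiber to the $s(g)$-fiber), and with the paper's convention your $f_1$ produces an arrow $f_0(u')\to f_0(u)$ for a morphism $u\to u'$, i.e.\ an anti-functor; replacing $h\cdot\sigma(u')=\sigma(u)\cdot g$ by $h\cdot\sigma(u)=\sigma(u')\cdot g$ fixes both at once. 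With that adjustment the argument goes through.
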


From the above two lemmas, we obtain the following characterization of Morita equivalence in terms of Hilsum-Scandalis language.

\begin{lemma}
If a Hilsum-Skandalis map $P : \CG \to \CH$ is also right $\CG$-principal, then $f : \phi^*\CG \to \CH$ obtained from the above lemma is an equivalence of groupoids. Note that $\tilde\phi : \phi^*\CG \to \CG$ is trivially an equivalence of groupoids.
\end{lemma}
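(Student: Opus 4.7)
The plan is to reduce the claim to the preceding Lemma 3.34 cited from \cite{L}: a groupoid homomorphism $f$ is an equivalence if and only if its associated bibundle $R_f$ is right principal over its source groupoid (left principality being automatic for bibundles arising from groupoid homomorphisms). Thus it suffices to show that $R_f$ is right $(\phi^\ast\CG)$-principal.

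First I would invoke the isomorphism $P \circ R_{\tilde\phi} \simeq R_f$ provided by Lemma 3.37 to reduce the question to showing that the composition $P \circ R_{\tilde\phi}$ is right $(\phi^\ast\CG)$-principal. The two factors of this composition are each biprincipal: $P$ is right $\CG$-principal by the standing hypothesis (and left $\CH$-principal by definition of a Hilsum--Scandalis map), while $R_{\tilde\phi}$ is right $(\phi^\ast\CG)$-principal because $\tilde\phi$ is an equivalence of groupoids, again by Lemma 3.34.

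The main step is then the general fact that a composition of two biprincipal bibundles is biprincipal. Writing $P \circ R_{\tilde\phi} = (P \times_{G_0} R_{\tilde\phi})/\CG$, one must verify that the map
$$\bigl([p,r],\, g\bigr) \longmapsto \bigl([p,r],\, [p,\, r \cdot g]\bigr)$$
from $(P \circ R_{\tilde\phi}) \times_{(\phi^\ast\CG)_0} (\phi^\ast\CG)_1$ to $(P \circ R_{\tilde\phi}) \times_{H_0} (P \circ R_{\tilde\phi})$ is a diffeomorphism. Bijectivity on points comes from a diagram chase: given two classes $[p_1, r_1]$ and $[p_2, r_2]$ in $P \circ R_{\tilde\phi}$ with the same image in $H_0$, I use the right $\CG$-principality of $P$ to find a unique $\CG$-arrow that lets us adjust representatives so the $P$-factors coincide, and then invoke right $(\phi^\ast\CG)$-principality of $R_{\tilde\phi}$ on the adjusted $R_{\tilde\phi}$-factors to produce the required unique $(\phi^\ast\CG)$-arrow.

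The hard part is verifying smoothness of the inverse of this map after passing to the $\CG$-quotient. Here one needs freeness and properness of the $\CG$-action on $P \times_{G_0} R_{\tilde\phi}$ (so that the quotient is a manifold and admits smooth local sections) together with the submersion hypotheses baked into the definitions of principal bibundles, which together let the ``adjust representatives and apply right principality of each factor'' procedure be carried out smoothly in families. Once this biprincipality of $P \circ R_{\tilde\phi}$ is established, the isomorphism with $R_f$ transports it, and Lemma 3.34 applied in the reverse direction immediately yields that $f : \phi^\ast\CG \to \CH$ is an equivalence of groupoids.
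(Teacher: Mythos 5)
Your proposal is correct and follows essentially the same route as the paper: both arguments invoke Lemma 3.34 twice (once to get right principality of $R_{\tilde\phi}$ from $\tilde\phi$ being an equivalence, once at the end to conclude $f$ is an equivalence), pass through the isomorphism $P \circ R_{\tilde\phi} \simeq R_f$ from Lemma 3.37, and rest on the fact that a composition of biprincipal bibundles is biprincipal. The only difference is that the paper states this last fact without proof, whereas you sketch its verification; your sketch is consistent with the paper's conventions.
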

\begin{proof}
Note that $R_{\tilde\phi}$ is biprincipal, since $\tilde\phi$ is an equivalence of groupoids. The composition of two biprincipal bundle $P\circ R_{\tilde\phi}$ is also biprincipal, and hence the isomorphic bibundle $R_f$ also biprincipal. Therefore $f$ is an equivalence of groupoids $\phi^*\CG$ and $\CH$.
\end{proof}

The above lemma justifies the notion of the Morita equivalence in the Hilsum-Skandalis setting.
\begin{definition}
A Hilsum-Scandalis map $(R, \rho, r)$ is a Morita equivalence when it is both a principal $\CG$-bundle and a principal $\CH$-bundle.
\end{definition}

\section{Construction of equivariant immersions from orbifold embeddings}\label{conv}

Let $\mathcal{H}$ be an orbifold groupoid, and $\phi : \mathcal{H} \to [M/G]$ be a groupoid morphism which factors through an orbifold embedding $\psi$ and a slice representation $p : \CG \to [M/G]$.
\begin{equation}\label{facorbemb}
\xymatrix{
	\CH \ar[dr]_{\psi : orb. emb.} \ar[rr]_{\phi} && [M/G]\\
	& \CG \ar[ur]_{p : \cong} &}
\end{equation}
With this assumption in this section, we construct a $G$-equivariant immersion map $\iota : N \to M$ for some $G$-manifold $N$.

\begin{prop}
Consider $\mathcal{H}, [M/G], \phi, \psi$ as above. 

Then, there exists $G$-space $N$ and a strong $G$-equivariant immersion $\iota : N \to M$ such that
\begin{itemize}
\item $[N/G]$ is Morita equivalent to $\mathcal{H}$.
\item the induced map $[N/G] \to [M/G]$, again denoted by $\iota$, fits into the following diagram of Lie groupoid homomorphisms
\begin{equation}\label{lift}
\xymatrix{ 
   [N/G]  \ar[rr]^{\iota}  \ar[d]_{Morita\, \cong} && [M/G]\\
   \mathcal{H} \ar[urr]_{\phi}   && } 
\end{equation}
and $\iota$ is a $G$-equivariant immersion.
\end{itemize}
\end{prop}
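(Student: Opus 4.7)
The strategy is to exhibit $N$ as a quotient of the Hilsum--Skandalis bibundle associated to $\phi = p \circ \psi$. Set
\[
R_\phi := ([M/G])_1 \times_{s, M, \phi_0} H_0 \;\cong\; G \times H_0,
\]
carrying the left $G$-action $g' \cdot (g, y) = (g'g, y)$ and the right $\CH$-action in which an arrow $h : y' \to y$ sends $(g, y)$ to $(g \tilde g_h, y')$, where $\phi_1(h) = (\tilde g_h, \phi_0(y')) \in G \times M$. I would define $N := R_\phi / H_1$ equipped with the residual $G$-action, and let $\iota : N \to M$ descend from the $[M/G]$-anchor $(g, y) \mapsto g \cdot \phi_0(y)$.

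The verification has three main steps. \emph{Step 1.} The right $H_1$-action on $R_\phi$ is free and proper. For freeness, if $(g, y) \cdot h = (g, y)$ then the second coordinate forces $h \in H_y$ and the first forces $\phi_1(h) = 1_{\phi_0(y)}$; Lemma~\ref{localinject} applied to the orbifold embedding $\psi$, together with injectivity of $p_1$ on isotropy (since $p$ is an equivalence), then gives $h = 1_y$. Properness of the action follows from properness of $(s, t) : H_1 \to H_0 \times H_0$. Hence $N$ is a smooth $G$-manifold. \emph{Step 2.} Near $y \in H_0$, the orbifold embedding supplies $\CH|_{\phi_0^{-1}(U_x)} \cong G_x \ltimes (G_x \times_{H_y} V_y)$, while the slice representation realizes $U_x$ as a slice in $M$ transverse to the $G$-orbits and identifies $G_x$ with the $G$-isotropy at $\phi_0(y) \in M$. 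Since $\CH$ is \'etale, only the isotropy $H_y$ acts nontrivially on a small neighborhood $B_g \times V_y \subset R_\phi$, and direct computation shows
\[
N|_{\text{near } [g, y]} \;\cong\; G \times_{H_y} V_y,
\]
where $H_y$ acts on $G$ by right multiplication through $\phi_1 : H_y \hookrightarrow G$. This is precisely the local model of Lemma~\ref{locmodelstrong}. In this picture $\iota[g, v] = g \cdot p_0(v)$ is manifestly $G$-equivariant, is an immersion because $V_y$ sits inside a slice transverse to the $G$-orbits, and is strong because the fiber $\iota^{-1}(p)$ is locally the finite coset space $G_p / \phi_1(H_{y'})$, on which $G_p$ acts transitively by left multiplication.

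\emph{Step 3.} Finally, $R_\phi$ with anchors $\rho : R_\phi \to N$ (the quotient map) and $\pi_2 : R_\phi \to H_0$ is a biprincipal bibundle between $[N/G]$ and $\CH$: right $\CH$-principality is Step 1, and left $[N/G]$-principality holds because the $G$-action on $G \times H_0$ is free and simply transitive on the fibers of $\pi_2$. This yields the Morita equivalence $[N/G] \sim \CH$, and commutativity of \eqref{lift} follows by tracking $y \in H_0 \mapsto [e, y] \in N \mapsto \phi_0(y) \in M$, reproducing $\phi$. The main obstacle will be Step 2: converting the abstract quotient $R_\phi / H_1$ into the explicit local model $G \times_{H_y} V_y$ requires carefully matching the Hilsum--Skandalis $H_1$-action against the \'etale structure of $\CH$ and the slice model of $[M/G]$. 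Once this local model is secured, the immersion, strongness, and Morita properties follow fairly directly from the framework of section~6 and Lemmas~\ref{Slice theorem}--\ref{locmodelstrong}.
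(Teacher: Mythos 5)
Your construction is essentially the paper's own proof: the same Hilsum--Skandalis bibundle $R_\phi = (G\times M)\times_{s,M,\phi_0}H_0 \cong G\times H_0$ with the same two commuting actions, the same freeness argument via Lemma~\ref{localinject} plus the fact that the equivalence $p$ preserves isotropy groups, the same quotient $N = R_\phi/H_1$, and the same biprincipality argument for the Morita equivalence, with only the immersion check phrased differently (explicit local model versus the paper's fiber-counting in the local chart). The one substantive caveat is your justification of strongness: the observation that $\iota^{-1}(p)$ is \emph{locally} a coset space $G_p/\phi_1(H_y)$ does not by itself rule out $\iota^{-1}(p)$ decomposing into several such pieces coming from distinct $\CH$-orbits in $\phi_0^{-1}(G\cdot p)$, so you must also invoke condition (3) of Definition~\ref{suborbifold} (injectivity of $|\phi|$), as the paper does in Lemma~\ref{strong} via Lemma~\ref{underiota}; otherwise the same local argument would ``prove'' strongness for the non-injective morphism of Example~\ref{nonessential}.
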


\begin{proof}
From the groupoid structure of $[M/G]$, we have smooth maps
$$\phi_0 : H_0 \to M \qquad \phi_1 : H_1 \to G \times M$$
which are compatible with the structure maps of an orbifold groupoid. As in \cite{PS},  we interpret $\phi$ as a Hilsum-Skandalis-type map. So, we define a bibundle $R_\phi$ as
$$R_\phi := (G \times M) \times_{s,M,\phi_0} H_0.$$
Note that $R_\phi$ is a smooth manifold since $s$ is a submersion.
\begin{equation}\label{HS}
\xymatrix{ 
   R_\phi \ar[r]^{\pi_1} \ar[d]_{\pi_2}  & G \times M \ar[d]^{s} \\
   H_0 \ar[r]^{\phi_0}   & M} 
\end{equation}

This space is first of all smooth and has two maps to $H_0$ and $M$,
$$H_0 \stackrel{\pi_2}{\longleftarrow} R_{\phi} \stackrel{\WT{\iota} := t \circ \pi_1}{\longrightarrow} M$$ 
which will be used as anchor maps below. (We denote $t \circ \pi_1$ by $\WT{\iota}$.) 

We define a right $\mathcal{H}$-action and a left $G$-action on $R_\phi$ as follows:
Write an element of $R_\phi$ by $\left(g, \phi_0 (y), y \right)$ which indicates a point $y$ in $H_0$ and an arrow $g \in G \times M$ whose source is $\phi_0 (y)$. Then,
\begin{itemize}
\item For an arrow $h \in H_1$,
$$  \left(g, \phi_0 (y), y \right) \cdot h := \left( g \circ \phi_1(h), \phi_0(s(h)), s(h)   \right);$$
\item For $g \in G$,
$$g' \cdot \left(g, \phi_0 (y), y \right) := \left( g' \circ g, \phi_0 (y), y \right).$$ 
\end{itemize}
$R_\phi$ is a right $\mathcal{H}$-space and a left $G$-space as the figure \ref{twoactions} below shows. As mentioned, the corresponding anchor maps are $\pi_2$ and $\tilde{\iota}$, respectively. Indeed, $\pi_2$ is a principal left $G$-bundle ($\pi_2$ is a submersion since $s$ in the diagram \eqref{HS} is submersion).

\begin{figure}[h]
\begin{center}
\includegraphics[height=2in]{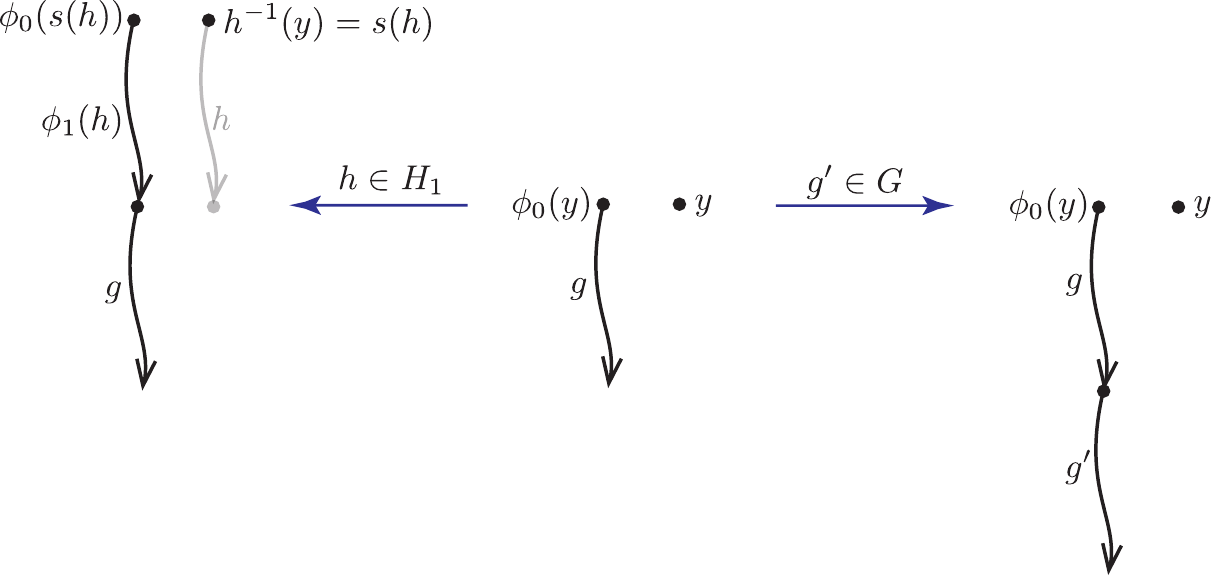}
\caption{the right $\mathcal{H}$-action and the left $G$-action on $R_\phi^\ast$}
\label{twoactions}
\end{center}
\end{figure}
Now, the following are clear from the definition of both actions.

\begin{lemma}\label{twoact}
Two actions defined above have the following properties:
\begin{enumerate}
\item[(1)] The right $\mathcal{H}$-action we have defined is free;
\item[(2)] The left $G$-action and the right $\mathcal{H}$-action commute;
\item[(2)] $\WT{\iota}$ is a $G$-equivariant map which is invariant under the $\mathcal{H}$-action.
\end{enumerate}
\end{lemma}
\begin{proof}
We only show (1) and the others follow from the definition. Suppose $h$ fixes $\left((g,\phi_0 (y)),y \right) \in R_\phi$. Then $h$ should be an element of $H_y$ and $g \cdot \phi_1(h) = g$, where $H_y$ is a local isotropy group of $y \in V_y$ for some local chart $H_y \ltimes V_y$ of $\CH$. Thus $h$ lies in the kernel of the group homomorphism 
$$\phi_1|_{H_y} : H_y \to G.$$
Note that equivalence map $p : \CG \to [M/G]$ preserves isotropy groups. More precisely,
$$Hom_{\CG} (x,z) \cong Hom_{[M/G]} (p(x), p(z))$$
for all $x,z \in G_0$.
Now it follows that $\phi_1 | = p_1 \circ \psi_1 |$ is injective, because $\psi_1 |$ is injective from the definition of orbifold embedding and $p_1$ preserves isotropy groups. Hence, the $h$ is the identity.
\end{proof}

We denote by $N$ the quotient space of $R_\phi$ by the right $\mathcal{H}$-action and by $\pi : R_\phi^\ast  \to N$ the quotient map. 

\begin{lemma}
$N$ is a smooth manifold.
\end{lemma}
\begin{proof}
This follows directly from the fact that the right $\mathcal{H}$-action on $R_\phi$ is free ((2) of Lemma \ref{twoact}) and proper (because $\CH$ itself is \'{e}tale and hence proper). 
\end{proof}

From (2) of Lemma \ref{twoact}, $N$ admits a left $G$-action which is induced by $G$-action on $R_\phi$. Since the $G$-action on $M$ is locally free, so is it on $N$. Therefore, we get a global quotient  orbifold $[N/G]$ from the orbifold embedding $\phi:\mathcal{H} \to [M/G]$.

\begin{lemma}
$[N/G]$ is Morita equivalent to $\mathcal{H}$.
\end{lemma}
\begin{proof}
Note that we have a Hilsum-Skandalis map $ \mathcal{H} \to [N/G] $ (or, $\bar{\pi}_2 : [N/G] \to \mathcal{H}$ from $\pi_2 : R_\phi \to H_0$):
\begin{equation*}
\xymatrix{ 
   R_\phi  \ar[d]_{\pi}  \ar[r]^{\pi_2} & H_0\\
   N   &  } 
\end{equation*}
We have shown that $\pi$ is principal in (2) of Lemma \ref{twoact}. It is also obvious from the Hilsum-Skandalis construction that $\pi_2$ is principal. So the Hilsum-Skandalis map from $R_\phi$ is a Morita equivalence.
\end{proof}

From (3) of Lemma \ref{twoact}, we can observe that $\WT{\iota}$ factors through the quotient space $H$. Since $\WT{\iota}$ is a $G$-equivariant, we get a $G$-equivariant map $\iota : N \to M$. Furthermore,
\begin{lemma}
$\iota$ is a $G$-equivariant immersion.
\begin{equation}
\xymatrix{ 
   R_\phi  \ar[d]_{\pi}  \ar[rr]^{\WT{\iota}} && M\\
   N \ar[urr]_{\iota}   &&   } 
\end{equation}
\end{lemma}

\begin{proof}
Since being an immersion is a local property, it suffices to prove it locally. However, we have a nice local model of $\iota$ from \eqref{model}. Thus, it is enough to prove it with $\CH = G_x \ltimes (G_x \times_{H_y} V_y)$, $\CG =  G_x \ltimes U_x$, $\psi : G_x \ltimes (G_x \times_{H_y} V_y) \to G_x \ltimes U_x $ and $p : G_x \ltimes U_x \to G \ltimes \WT{U}_x$ for $\WT{U}_x \cong G \times_{G_x} U_x$ as in the Lemma \ref{Slice theorem}.



Then, $R_\phi = \left(G \times \WT{U}_x \right) \times_{s, \WT{U}_x, \phi_0} \left( G_x \times_{H_y} V_y \right)$. We mod it out by the right $G_x$-action (considered as a local $H_1$-action) to get the local shape of $N$, again denoted by $N$ in this proof. Recall that this $G_x$-action is given by
$$\left( (k_1,a), [g_1 ,b] \right) \cdot g = [(k_1 g, g^{-1} \cdot a)  , [g^{-1} g_1, b]] $$
for $g \in G_x$, $k_1 \in G$ and $g_1 \in G_x$ where $a = g_1 \cdot \phi_0 (b)$. And $\tilde{\iota}$ on $\mathcal{R}_\phi$ which projects down to $\iota$ on $N$ is defined as
\begin{equation}\label{iotabarlocal}
\tilde{\iota} \left( (k_1,a), [g_1 ,b] \right)  = k_1 \cdot a = k_1 g_1 \cdot \phi_0 (b) \in \WT{U}_x.
\end{equation}
where $\phi  = p \circ \psi  : \mathcal{H} \to [M/G]$.

For given $z \in \WT{U}_x$, we check how many points in $N$ are mapped to $z$. Suppose
$$\iota  [(k_1,a_1), [g_1 ,b_1] ] =z \quad \mbox{and} \quad
\iota  [(k_2,a_2), [g_2 ,b_2] ] =z.$$
Up to the $G_x$-action, we may assume that $g_1=g_2=1$ (recall $N = G_x \setminus R_\phi $). Therefore, we have $k_1 \phi_0 (b_1) = k_2 \phi_0 (b_2) = z$ by \eqref{iotabarlocal}. This implies that $k_2^{-1} k_1 \in G_x$ since both $\phi_0 (b_1)$ and $\phi_0 (b_2)$ belong to the normal slice at $x$. As $G_x$ is finite, there are finitely many $k_2$ with this property. 

Since $p_0$ is an embedding and every fiber of $\psi_0$ is finite, $\phi_0 = p_0 \circ \psi_0$ is an immersion whose fibers are all finite as well. Finally, as $b_2$ lies in the fiber $\phi_0^{-1} ( k_2^{-1} k_1 b_1)$, there can exist only finitely many such $b_2$'s.

\end{proof}
Finally, we show  in the following lemma that the resulting equivaraint immersion is strong which finishes the proof of the proposition
\end{proof}
\begin{lemma}\label{strong}
$\iota : N \to M$ constructed above is strong.
\end{lemma}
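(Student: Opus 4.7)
\medskip
\noindent\textbf{Proof plan.} The plan is to show, for each $z \in \iota(N) \subset M$, that any two preimages $q_1, q_2 \in \iota^{-1}(z)$ lie in the same $G_z$-orbit in $N$. First I would lift each $q_i$ to a representative in $R_\phi$, writing $q_i = [g_i, \phi_0(y_i), y_i]$ where $g_i = (\gamma_i, \phi_0(y_i)) \in G \times M$ is an arrow with $\gamma_i \cdot \phi_0(y_i) = z$. Since the left $G$-action and the right $\mathcal{H}$-action on $R_\phi$ commute (Lemma \ref{twoact}) and descend to $N$, it suffices to produce $k \in G_z$ and $h \in H_1$ with $k \cdot (g_1, \phi_0(y_1), y_1) = (g_2, \phi_0(y_2), y_2) \cdot h$ inside $R_\phi$.

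The key step is finding an arrow $h : y_1 \to y_2$ in $\mathcal{H}$. From $\gamma_i \cdot \phi_0(y_i) = z$ one reads off the arrow $(\gamma_2^{-1}\gamma_1, \phi_0(y_1))$ in $[M/G]$ going from $\phi_0(y_1)$ to $\phi_0(y_2)$. Because the slice representation $p : \mathcal{G} \to [M/G]$ is an equivalence, and equivalences induce bijections on hom-sets, this arrow lifts to an arrow in $\mathcal{G}$ from $\psi_0(y_1)$ to $\psi_0(y_2)$. The essential injectivity of the orbifold embedding $\psi$ (part (3) of Definition \ref{suborbifold}, reformulated in the remark after Example \ref{nonessential}) then supplies the desired $h : y_1 \to y_2$ in $\mathcal{H}$.

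With $h$ in hand, I would write $\phi_1(h) = (\lambda, \phi_0(y_1))$, so that $\lambda \cdot \phi_0(y_1) = \phi_0(y_2)$, and set $k := \gamma_2 \lambda \gamma_1^{-1}$. Two short verifications remain. First, $k \cdot z = \gamma_2 \lambda \cdot \phi_0(y_1) = \gamma_2 \cdot \phi_0(y_2) = z$, so $k \in G_z$ automatically. Second, by the action formulas from the construction of $R_\phi$, one computes $k \cdot (g_1, \phi_0(y_1), y_1) = (g_2 \circ \phi_1(h), \phi_0(y_1), y_1) = (g_2, \phi_0(y_2), y_2) \cdot h$, so after projecting to $N = R_\phi / H_1$ one obtains $k \cdot q_1 = q_2$.

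The main obstacle, and the sole nontrivial input, is the existence of the arrow $h$ in $\mathcal{H}$: without the essential injectivity requirement on orbifold embeddings (which is precisely why the authors strengthened the Adem--Leida--Ruan definition), two sheets $y_1, y_2$ of $\mathcal{H}$ lying over the same $[M/G]$-orbit might fail to be connected by any arrow of $\mathcal{H}$, and then no $k \in G_z$ could identify $q_1$ with $q_2$. Everything else is formal unpacking of the two commuting actions on the bibundle $R_\phi$.
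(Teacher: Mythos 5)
Your proof is correct, and it rests on the same essential input as the paper's: the injectivity of $|\phi|$ on coarse spaces (equivalently, essential injectivity of the embedding $\psi$), which is exactly the condition the authors added to the Adem--Leida--Ruan definition. The route is different in execution, though. The paper's proof is two lines: it identifies $|\iota|$ with $|\phi|$ via the homeomorphisms $|N/G|\cong|\mathcal{H}|$ and $|M/G|\cong|\mathcal{G}|$, concludes that $|\iota|$ is injective, and stops there --- implicitly invoking the (easy) converse of Lemma \ref{underiota}, namely that injectivity of $|\iota|$ together with $G$-equivariance forces the $G_p$-action on $\iota^{-1}(p)$ to be transitive. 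You instead work element-by-element in the bibundle $R_\phi$: you lift the two fiber points, produce the arrow $h:y_1\to y_2$ in $\mathcal{H}$ from essential injectivity (via the hom-set bijection of the equivalence $p$), and exhibit the explicit $k=\gamma_2\lambda\gamma_1^{-1}\in G_z$ intertwining them modulo the $\mathcal{H}$-action; your verification that $k\cdot(g_1,\phi_0(y_1),y_1)=(g_2,\phi_0(y_2),y_2)\cdot h$ checks out against the action formulas. What your version buys is that it makes fully explicit the step the paper leaves tacit, and it identifies the group element realizing transitivity; what the paper's version buys is brevity, since passing to coarse quotients collapses all the bookkeeping of the two commuting actions.
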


\begin{proof}
Note that $|N/G| \cong |\mathcal{H}|$ and $|M/G| \cong |\mathcal{G}|$. From the construction in Section \ref{generalcase} (or Section \ref{conv}), we have
\begin{equation*}
\xymatrix
{ |N/G| \ar[r]^{|\iota|} \ar[d]_{\cong} & |M/G| \ar[d]^{\cong}\\
 |\mathcal{H}| \ar[r]_{|\phi|} & |\mathcal{G}|}
\end{equation*}
Since $|\phi|$ is injective from the definition of the orbifold embedding, $|\iota|$ is injective.

\end{proof}

\section{General case}\label{generalcase}
So far, we have considered a translation groupoid $[M/G]$ as our target space. The
construction can be generalized to the case of general orbifolds which we will discuss from now on. We state this as a theorem, first.

\begin{theorem}\label{moregene}
Let $\phi : \CH \to \CG$ be an orbifold embedding, where $\CG$ is Morita equivalent to a translation groupoid $[M/G]$. Then, there exist a manifold $N$ on which the Lie group $G$ acts locally freely such that
\begin{enumerate}
\item[(i)] $\mathcal{H} \simeq [N/G]$ and $\mathcal{G} \simeq [M/G]$
\item[(ii)] there exists a $G$-equivariant immersion $\iota : N \to M$ which makes the diagram
\begin{equation}\label{Mor}
\xymatrix{ 
   \mathcal{H}  \ar@{<->}[d]_{Morita}  \ar[rr]^{\phi} && \mathcal{G} \ar@{<->}[d]^{Morita}\\
   [N/G] \ar[rr]_{\iota}   &&  [M/G]  } 
\end{equation}
commute.
\end{enumerate}
\end{theorem}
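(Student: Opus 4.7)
The plan is to reduce the general case to the situation of Section \ref{conv} by pulling the orbifold embedding $\phi$ back across the Morita equivalence relating $\CG$ to $[M/G]$. Fix a span of equivalences $\CG \xleftarrow{q} \mathcal{K} \xrightarrow{p} [M/G]$ witnessing this Morita equivalence, and form the fibered product groupoid $\CH' := \CH \times_{\CG} \mathcal{K}$ of Definition \ref{fiberprod}. By Lemma \ref{pbequiv}, $\CH'$ is a Lie groupoid and the projection $pr_1 : \CH' \to \CH$ is an equivalence; in particular $\CH'$ is Morita equivalent to $\CH$.

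The key technical step is to verify that the other projection $pr_2 : \CH' \to \mathcal{K}$ is itself an orbifold embedding. This is a local assertion. Since $q$ is an equivalence it identifies local isotropy groups via $q_1$, so for any $z \in K_0$ with $q_0(z) = x$ a neighborhood of $z$ can be identified (up to the $G_x$-action) with a neighborhood of $x$ in $G_0$, with $K_z \cong G_x$. Working in these charts, the local model \eqref{model} of $\phi$ near a point $y \in \phi_0^{-1}(x)$ pulls back to the model $K_z \ltimes (K_z \times_{H_y} V_y)$ around the corresponding object of $\CH'$. All three conditions of Definition \ref{suborbifold} for $pr_2$ (immersion on objects, the transitivity and local model condition \eqref{model}, and essential injectivity together with properness at the level of quotient spaces) then follow from the corresponding properties of $\phi$; the last uses that $|q|$ and $|pr_1|$ are homeomorphisms, under which $|pr_2|$ is identified with $|\phi|$.

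With this in hand, the composite $p \circ pr_2 : \CH' \to [M/G]$ factors as an orbifold embedding followed by an equivalence into a translation groupoid, matching exactly the hypothesis of diagram \eqref{facorbemb}. The construction of Section \ref{conv} therefore applies to $p \circ pr_2$ and produces a manifold $N$ equipped with a locally free $G$-action, together with a strong $G$-equivariant immersion $\iota : N \to M$ such that $[N/G]$ is Morita equivalent to $\CH'$, compatibly with $\iota$ and $p \circ pr_2$. Composing the resulting Morita equivalence $[N/G] \simeq \CH'$ with $pr_1 : \CH' \to \CH$ on the left, and invoking $q$ and $p$ on the right, yields the commutative diagram \eqref{Mor}.

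The main obstacle will be the local verification in the second paragraph: concretely, showing that pullback along an equivalence of orbifold groupoids carries the local model \eqref{model} to one of the same form, and in particular matches the $H_y$-action on $V_y$ with the action inherited from $\mathcal{K}$ through the identification $K_z \cong G_x$ coming from $q_1$.
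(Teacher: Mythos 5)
Your overall strategy --- pull $\phi$ back along the Morita equivalence via the fibered product of Definition \ref{fiberprod} and then feed the result into the construction of Section \ref{conv} --- is exactly the paper's first move. But the step you yourself flag as the key one, namely that $pr_2 : \CH' = \CH \times_{\CG} \mathcal{K} \to \mathcal{K}$ is again an orbifold embedding in the sense of Definition \ref{suborbifold}, is false in general, and the failure is not a technicality. The fibered product fattens the domain: an object of $\CH'$ over $z \in K_0$ with $q_0(z)=x$ is a triple $(y', g, z)$ where $g$ ranges over \emph{all} arrows $\phi_0(y') \to q_0(z)$ in $G_1$, so the fiber $(pr_2)_0^{-1}(z)$ has $|G_x|\cdot|\phi_0^{-1}(x)|$ elements rather than $|\phi_0^{-1}(x)| = |G_x|/|H_y|$. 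Since the isotropy group of such a triple is still $\cong H_y$, no isomorphism of the form \eqref{model}, $\CH'|_{(pr_2)_0^{-1}(W_z)} \cong K_z \ltimes (K_z \times_{H_y} V_y)$, can exist: the right-hand side has only $|G_x|/|H_y|$ objects over $z$. The restricted groupoid is \emph{equivalent} to the required local model but not \emph{isomorphic} to it, and Definition \ref{suborbifold} demands an isomorphism --- this is precisely the non-Morita-invariance of orbifold embeddings that the paper illustrates with the teardrop and $\RR^2/\ZZ_3$ examples, here triggered by replacing $\CH$ with the equivalent groupoid $\CH'$. A minimal counterexample: take $\phi = \mathrm{id} : \CG \to \CG$ with $\CG = G_x \ltimes U_x$ and $\mathcal{K} = \CG$, $q = \mathrm{id}$; then $\CH' = \CG \times_\CG \CG$ is the diagonal of Definition \ref{orbdiag}, whose fiber over $x$ under $pr_2$ is a $G_x$-torsor, while the local model would force a single object there. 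Consequently you cannot literally invoke diagram \eqref{facorbemb} and the Section \ref{conv} machinery for $p \circ pr_2$, because the proof there that $\iota$ is an immersion explicitly uses the local model \eqref{model} for the orbifold-embedding factor.

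The paper avoids this trap: after forming $\phi^\ast \CG'$ it does \emph{not} claim $pr_2$ is an orbifold embedding. Instead it builds the Hilsum--Scandalis bibundle $R_{\tilde\phi}$ of the composite $\tilde\phi = \sigma \circ pr_2$ directly, checks that the right $\phi^\ast\CG'$-action is free and proper (using only injectivity of the structure maps on isotropy groups, which survives the pullback), and then proves that the induced $\iota : N \to M$ is an immersion by an explicit computation of $\ker d\tilde\iota$, using the \'etale property of $\CG$ and the defining conditions of an equivalence to show the kernel is tangent to the $\phi^\ast\CG'$-orbits. To repair your argument you would either need to reproduce that direct tangent-space computation, or replace $\CH'$ by a carefully chosen smaller groupoid (e.g.\ one obtained by selecting representatives of the arrow component $g$) that is isomorphic, not merely equivalent, to the required local model --- neither of which your second paragraph supplies.
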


\begin{remark}
The diagram in (ii) of the theorem can be regarded as a diagram of morphisms in the category of Lie groupoids where we can invert equivalances. (See Definition \ref{def:Mor}, or \cite{ALR} for the precise definition of the morphisms in the category of groupoids.)
\end{remark}
We proceed the poof of theorem \ref{moregene} as follows:\\


After fixing a Morita equivalence map 
$$\mathcal{G} \stackrel{\psi \,\simeq}{\longleftarrow} \mathcal{G}' \stackrel{\sigma \, \simeq}{\longrightarrow} [M/G]$$
for some Lie groupoid $\mathcal{G}'$, we pull back the equivalence map $\psi : \mathcal{G}' \to \mathcal{G}$ to $\mathcal{H}$ to get $\phi^\ast \mathcal{G}'=\CH \times_{\CG} \mathcal{G}'$. Recall that
\begin{align*}
(\phi^\ast \mathcal{G}')_0 &= H_0 \times_{\phi_0, G_0, s} G_1 \times_{t, G_0, \psi_0} G_0'\\
(\phi^\ast \mathcal{G}')_1 &= H_1 \times_{s\phi_1, G_0, s} G_1 \times_{t, G_0, s\psi_1} G_1'
\end{align*}
We denote the composition $\sigma \circ pr_2 : \phi^\ast \mathcal{G}' \to [M/G]$ by $\tilde{\phi}$. Then,
$$\tilde{\phi}_0 = \sigma_0 \circ (pr_2)_0 \quad \tilde{\phi}_1 = \sigma_1 \circ (pr_2)_1$$
where $pr_2$ is the projection from $\phi^\ast \mathcal{G}'=\CH \times_{\CG} \mathcal{G}'$ to $\mathcal{G}'$. We will apply the construction in the previous section to $\tilde{\phi}$.

\begin{equation}\label{secondstage}
\xymatrix{
& & [M/G]  \\
&&\\
\phi^\ast \mathcal{G}' \ar[rr]^{pr_2} \ar[uurr]^{\tilde{\phi}} \ar[dd]_{pr_1}& & \mathcal{G}' \ar[uu]_{\sigma : equiv.}\ar[dd]^{\psi : equiv.} \\
&&\\
\mathcal{H} \ar[rr]_{\phi : orb. emb.}&& \mathcal{G} }
\end{equation}
First of all $\phi^\ast \CG'$ is equivalent to $\CH$. Pull-back of any equivalence is again an equivalence as shown in Lemma \ref{pbequiv}.


To construct a $G$-equivariant immersion from $\tilde{\phi} : \phi^\ast \mathcal{G}' \to [M/G]$, we introduce the Hilsum-Skandalis bibundle associated to $\tilde{\phi}$ as we did in the previous section. Recall
$$R_{\tilde{\phi}} = (G \times M) \times_{s,M,\tilde{\phi}_0} (\phi^\ast \mathcal{G}')_0.$$
An element of $R^\ast_{\tilde{\phi}}$ consists of the following data.
\begin{equation*}
\xymatrix{\stackrel{m}{\odot} \ar[d]^a & \star \left( = [x, \,\,\, \phi_0 (x) \stackrel{g}{\longrightarrow}  \psi_0 (z), \,\,\, z] \right) \\ &  }
\end{equation*}
where $m \in M$, $a \in G$, $x \in H_0$, $z \in G_0'$ and $\sigma_0 (z) =m$. Write $\bf{r}$ for this element. Then, $pr_2 ({\bf r}) =z$ and the $G$-equivariant map $\tilde{\iota} :  R_{\tilde{\phi}} \to M$ is given by $\tilde{\iota} ({\bf r}) = a \cdot \sigma_0 (z)$. Recall
\begin{equation}\label{generaliota}
\xymatrix{R_{\tilde{\phi}} \ar[d] \ar[r]^{\tilde{\iota}}& M \\
N \ar[ur]_{\iota} & 
}
\end{equation}
where $N$ is obtained from $R_{\tilde{\phi}}$ after taking a quotient by $\phi^\ast \mathcal{G}'$-action. Since local groups are preserved by equivalences, restriction of 1-level maps appearing in \ref{secondstage} to any local groups are all injective. Then, similar argument as in Lemma \ref{twoact} shows the right $\phi^\ast \mathcal{G}’$-action on $R_{\tilde{\phi}}$ is free and proper. Note that $\phi^\ast \CG$ is proper since it is equivalent to the proper Lie (indeed, \'{e}tale) groupoid $\CG$. Therefore, $N$ is a smooth manifold.

It remains to show that the induced $G$-equivariant map $\iota : N \to M$ is indeed an immersion. We will directly compute the kernel of $d\tilde{\iota}$. For notational simplicity, we will write $\tau_\ast$ for the derivative $d \tau$ of a smooth map $\tau$ between two manifolds.

 A tangent vector on $R_{\tilde{\phi}} $ at $\bf{r}$ is given by the tuple
$${\bf v} = [(v_l=v_a \oplus v_m,v_m),v_x,v_g,v_z]$$
where $v_l \in T(G \times M)$ and $v_a \in TG$ with the relations
\begin{itemize}
\item $s_\ast(v_l)=v_m$
\item $\left(\sigma_0\right)_\ast (v_z) = v_m$,
\item $s_\ast (v_g) =  \left(\phi_0\right)_\ast (v_x) \quad  t_\ast (v_g) = \left( \psi_0 \right)_\ast (v_z) $.
\end{itemize}
Since $\mathcal{G}$ is \'etale and hence both
$$s_\ast : T_g G_1 \to T_{\phi_0 (x)} G_0 \quad\mbox{and} \,\,\,\,\,\, t_\ast : T_g G_1 \to T_{\psi_0 (z)} G_0$$
are isomorphisms, we may rewrite the third relation as  
\begin{equation}\label{thirdrel}
\,\,\, \bullet \,\, s_\ast^{-1} \left(\phi_0\right)_\ast (v_x) = t_\ast^{-1} \left( \psi_0 \right)_\ast (v_z) = v_g.
\end{equation}

From the first relation, it suffices to represent ${\bf v}$ as
$${\bf v} = [v_a \oplus v_m,v_x,v_g,v_z]$$
Note that $v_m$ and $v_g$ are determined by $v_x$ and $v_z$. One can easily check that
$$\tilde{\iota}_\ast ({\bf v}) = t_\ast (v_a \oplus v_m) =  \left(v_a\right)^{\#} +(L_a)_* (v_m),$$
where $(v_a)^{\#}$ is a vector field on $M$ generated by the infinitesimal action of $v_a$ on $M$.
For simplicity, we assume that $a$ is the identity element of $G$. Then,
$$\tilde{\iota}_\ast ({\bf v}) = (v_a)^{\#} + v_m.$$
Our goal is to compute the kernel of this map. If we can show that the kernel of $\tilde{\iota}_\ast$ lies in the tangent direction of $(\phi^\ast \mathcal{G}')$-orbit, then it will imply that $\iota$ is an immersion.

To do this, we first characterize the direction of $(\phi^\ast \mathcal{G}')$-orbit. By the definiton of the right $\phi^\ast \mathcal{G}'$ action on $R_{\tilde{\phi}}$, we should consider an arrow of  $\phi^\ast \mathcal{G}'$ given by a pair $(h,k) \in H_1 \times G_1'$ such that 
\begin{equation}\label{fixtar}
t(h) = x \,\,\, \mbox{and}\,\,\, t(k)=z.
\end{equation} Considering the infinitesimal version of $\phi^\ast \mathcal{G}'$-action carefully, we have the following lemma.
\begin{lemma}
${\bf v}$ is tangent to $(\phi^\ast \mathcal{G}')$-orbit if and only if there exists $(v_h, v_k) \in T_h H_1 \times T_k G_1'$ such that 
\begin{equation}\label{inffixtar}
t_\ast (v_h) = t_\ast (v_k) =0
\end{equation}
and
\begin{equation}\label{orbitdir}
{\bf v} = [  (\sigma_1)_\ast (v_k), s_\ast (v_h),v_g, s_\ast (v_k) ].
\end{equation}
(Here, we do not specify $v_g$ since they are completely determined by other components \eqref{thirdrel}.)
\end{lemma}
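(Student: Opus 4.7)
The plan is to view this lemma as an explicit description of the image of the infinitesimal action map for the right $\phi^\ast\mathcal{G}'$-action on $R_{\tilde\phi}$ at the point ${\bf r}$. First I recall from Definition \ref{fiberprod} and the Hilsum--Skandalis setup in Section \ref{conv} that an arrow $(h,g_0,k)\in(\phi^\ast\mathcal{G}')_1$ can act on ${\bf r}$ only when its target matches the object $(x,g,z)$ indexing ${\bf r}$; this forces $t(h)=x$, $t(k)=z$ (condition \eqref{fixtar}), and $\psi_1(k)\,g_0\,\phi_1(h)^{-1}=g$. Linearising these constraints at the unit arrow $(1_x,g,1_z)$ of $(x,g,z)$ yields the infinitesimal condition \eqref{inffixtar} and simultaneously expresses the derivative $v_{g_0}$ in terms of $(v_h,v_k)$.

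For the "only if" direction, take a smooth path $\gamma(t)=(h(t),g_0(t),k(t))$ of such arrows with $\gamma(0)=(1_x,g,1_z)$ whose tangent is $(v_h,v_{g_0},v_k)$. The right action formula, mimicking the one used in Section \ref{conv}, reads
\[ {\bf r}\cdot\gamma(t) = \bigl[\,(a,m)\cdot\tilde\phi_1(\gamma(t)),\; s(h(t)),\; g_0(t),\; s(k(t))\,\bigr], \]
and since $\tilde\phi_1=\sigma_1\circ(pr_2)_1$ depends only on the $k$-slot, differentiation at $t=0$ reads off the four components of ${\bf v}$ as $\bigl((\sigma_1)_\ast v_k,\; s_\ast v_h,\; v_g,\; s_\ast v_k\bigr)$. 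Here the middle entry $v_g$ is not free: because $\mathcal{G}$ is étale, it is completely determined by $s_\ast v_h$ and $s_\ast v_k$ through relation \eqref{thirdrel}, so the derivative is consistent with the constraint on $v_{g_0}$ obtained above.

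For the "if" direction, given $(v_h,v_k)$ satisfying \eqref{inffixtar}, I choose smooth curves $h(t),k(t)$ through $1_x,1_z$ realising these tangent vectors with targets held identically at $x,z$. The formula $g_0(t)=\psi_1(k(t))^{-1}\,g\,\phi_1(h(t))$ then provides a canonical smooth lift, making $\gamma(t)=(h(t),g_0(t),k(t))$ a smooth curve in $(\phi^\ast\mathcal{G}')_1$ of arrows with target $(x,g,z)$; differentiating ${\bf r}\cdot\gamma(t)$ as above yields precisely the vector in \eqref{orbitdir}.

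The main obstacle will be the bookkeeping surrounding the middle $G_1$-component $g_0$ of an arrow in $\phi^\ast\mathcal{G}'$, and verifying that $v_g$ is genuinely determined rather than an independent parameter, so that the description \eqref{orbitdir} is at once exhaustive and non-redundant.
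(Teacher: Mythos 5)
Your argument is correct and is exactly the computation the paper leaves implicit: the paper offers no proof of this lemma beyond the phrase ``considering the infinitesimal version of the $\phi^\ast\mathcal{G}'$-action carefully,'' and your linearization of the target-matching constraints $t(h)=x$, $t(k)=z$, $\psi_1(k)g_0\phi_1(h)^{-1}=g$ at the unit arrow, together with differentiating the action formula (whose $G\times M$-slot only sees $\sigma_1(k)$), is the intended route. The two small points you flag resolve as expected: $v_g$ is determined via \eqref{thirdrel} by \'etaleness of $\mathcal{G}$, and in the ``if'' direction the curve $k(t)$ inside the $t$-fibre exists because $t:G_1'\to G_0'$ is a submersion (while $v_h$ is in fact forced to vanish, as the paper's subsequent remark records).
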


\begin{remark}
In the equation (\ref{inffixtar}), $t_\ast (v_h)=0$ implies $v_h =0$ since $\mathcal{H}$ is \'etale.
Then the equation (\ref{orbitdir}) can be rewritten as
\begin{equation}\label{orbitdir1}
{\bf v} = [  (\sigma_1)_\ast (v_k), 0, 0, s_\ast (v_k) ].
\end{equation}
\end{remark}

Now, we are ready to prove the desired property of $\iota$.

\begin{lemma}
$\iota : N \to M$ above is an immersion.
\end{lemma}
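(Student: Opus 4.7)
The plan is to show that every tangent vector ${\bf v} \in T_{\bf r}R_{\WT{\phi}}$ with $\WT{\iota}_\ast{\bf v}=0$ is tangent to the $\phi^\ast\CG'$-orbit through ${\bf r}$. Since $\iota$ is induced from $\WT{\iota}$ by passing to the orbit quotient, this will give injectivity of $\iota_\ast$ and hence the immersion property.

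Write ${\bf v} = [v_a \oplus v_m, v_x, v_g, v_z]$ with the compatibility relations displayed above, and suppose $\WT{\iota}_\ast{\bf v}=(v_a)^\#+v_m=0$ (taking $a=e$; the general case follows by left translation). The first step is to lift $(v_a, v_m) \in T(G\times M)$ to a tangent vector $v_k \in T_k G_1'$ satisfying $s_\ast v_k = v_z$, $t_\ast v_k = 0$, and $(\sigma_1)_\ast v_k = (v_a, v_m)$. This is possible because $\sigma : \CG' \to [M/G]$ is an equivalence: condition (ii) of Definition \ref{equivalence} makes the corresponding $(s,t)$-square a fibered product of manifolds, and hence a fibered product at the tangent level. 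The prescribed triple $(v_z, 0, (v_a, v_m))$ is automatically compatible under $(\sigma_0)_\ast$ because $(\sigma_0)_\ast v_z = v_m$ matches the source and $(\sigma_0)_\ast \cdot 0 = 0 = t_\ast(v_a,v_m)$ matches the target.

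Together with $v_h = 0$ (forced by $t_\ast v_h = 0$ and the \'etale condition on $\CH$), this $v_k$ produces an orbit tangent vector ${\bf v}' = [(v_a, v_m), 0, v_g', v_z]$ as in \eqref{orbitdir}, where $v_g'$ is uniquely determined by $s_\ast v_g'=0$ and $t_\ast v_g' = (\psi_0)_\ast v_z$ using the \'etale property of $\CG$. Now consider the residual vector ${\bf w} = {\bf v} - {\bf v}' = [0, v_x, v_g - v_g', 0]$. Since both $v_g$ and $v_g'$ satisfy the same target relation $t_\ast v_g = t_\ast v_g' = (\psi_0)_\ast v_z$, we get $t_\ast(v_g - v_g') = 0$, whence $v_g = v_g'$ because $\CG$ is \'etale. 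Then $(\phi_0)_\ast v_x = s_\ast(v_g - v_g') = 0$, and the immersion condition on $\phi_0$ from the orbifold embedding forces $v_x = 0$. Therefore ${\bf w} = 0$ and ${\bf v} = {\bf v}'$ is tangent to the $\phi^\ast\CG'$-orbit, as desired.

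The main technical point is the second paragraph: extracting an orbit-direction representative $v_k$ requires invoking the fibered-product property of the equivalence $\sigma$ at the tangent level, and verifying that the natural compatibility data coming from ${\bf v}$ matches. Once this is done, the residual argument is a direct chase through the \'etale condition on $\CG$ and the immersion condition on $\phi_0$ that are built into the definition of orbifold embedding.
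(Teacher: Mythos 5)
Your overall strategy coincides with the paper's: show that every ${\bf v}\in\ker\WT{\iota}_\ast$ at ${\bf r}$ is tangent to the $\phi^\ast\CG'$-orbit, by first producing $v_k$ from $(v_a,v_m)$ via the equivalence $\sigma$ and then forcing the remaining components to vanish using the \'etale property of $\CG$ and the immersion property of $\phi_0$. The paper runs this argument with curves ($k(t)$, $g(t)$, $x(t)$); you linearize it via the fibered-product description of $G_1'$ coming from condition (ii) of Definition \ref{equivalence}, and your observation that the compatibility $(\sigma_0)_\ast\cdot 0=t_\ast(v_a,v_m)$ is exactly the kernel condition $(v_a)^{\#}+v_m=0$ is the right one. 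Up to presentation these are the same proof.

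There is, however, one step you assert without justification, and as written it is overdetermined. You define $v_g'$ by the two conditions $s_\ast v_g'=0$ and $t_\ast v_g'=(\psi_0)_\ast v_z$ ``using the \'etale property of $\CG$''. But since $\CG$ is \'etale, $s_\ast$ and $t_\ast$ are each isomorphisms, so either condition alone already determines $v_g'$; a vector satisfying both exists if and only if $(\psi_0)_\ast v_z=0$. This identity is exactly what is needed for ${\bf v}'=[(v_a,v_m),0,v_g',v_z]$ to satisfy the defining relation \eqref{thirdrel} of a tangent vector to $R_{\WT{\phi}}$ with second component $0$, so without it the orbit vector ${\bf v}'$ need not exist and the residual computation ${\bf w}={\bf v}-{\bf v}'$ cannot start. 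The identity does hold, but it requires the following argument, which the paper carries out explicitly (``$\psi_0\circ s\circ k(t)\equiv\psi_0(z)$''): since $\psi_1$ intertwines targets, $t_\ast(\psi_1)_\ast v_k=(\psi_0)_\ast t_\ast v_k=0$; since $\CG$ is \'etale this forces $(\psi_1)_\ast v_k=0$, and hence $(\psi_0)_\ast v_z=(\psi_0)_\ast s_\ast v_k=s_\ast(\psi_1)_\ast v_k=0$. Once this line is inserted, your argument closes correctly: $v_g=v_g'=0$ by \'etale-ness, $(\phi_0)_\ast v_x=s_\ast v_g=0$ gives $v_x=0$ by the immersion condition on $\phi_0$, and ${\bf v}={\bf v}'$ is an orbit direction, which is precisely the paper's conclusion.
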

\begin{proof}
Let ${\bf v} = [(v_a,v_m),v_x,v_g,v_z] \in \ker \tilde{\iota}_\ast$. i.e. $t_\ast (v_a \oplus v_m)=(v_a)_{\#} + v_m =0$. We should show that $v_h = 0$ and find $v_k$ satisfying \eqref{inffixtar} and \eqref{orbitdir1}.
First, we find $v_k$ as follows:\\
From the condition $\left(\sigma_0\right)_\ast (v_z) = v_m = (-v_a)_{\#}$, we get
$$\exp(t \cdot (-v_a)) \cdot m = m(t),$$
where $m(t)$ is a curve in $M$ with $m'(0) = v_m$.
Since $\exp{(-t \cdot v_a)} = \exp{(t \cdot v_a)}^{-1}$,
$$m \equiv \exp{(t \cdot v_a)} \cdot m(t).$$
Note that $m(t) = \sigma_0 (z(t))$ for some curve $z(t)$ in $G'_0$ with $z'(0) = v_z$. Since $\sigma$ is an equivalence map and $(a(t),m(t)) \in G \times M$ is an arrow from $\sigma_0 (z(t))$ to $\sigma_0 (z)$ for each $t$, there is unique $k(t) \in G_1'$ which maps to $\sigma_1 (k(t)) = (a(t),m(t))$ with $s(k(t))=z(t)$ and $t(k(t))\equiv z$. We define $v_k := k'(0) \in T_k G_1'$, then $(\sigma_1)_\ast (v_k) =(v_a, v_m) \in TG \times TM$ and $s_\ast (v_k) = v_z$.

Let $\gamma$ be a curve in the $R^\ast_{\tilde{\phi}}$ such that $\gamma'(0) = {\bf v}$. Consider a component of $\gamma$, 
 $g(t) \in G_1$ such that $g'(0) = v_g$. We claim that this curve $g(t)$ is a constant curve.

Note that $\psi_0 \circ t \circ k(t) \equiv \psi_0 (z)$. Since $\psi$ is an equivalence map and $\CG$ is \'etale, $\psi_0 \circ s \circ k(t) \equiv \psi_0 (z)$($\psi_1$ maps ``infinitesimal action'' on $\CG'$ whose image of target is fixed to a ``constant action'' on $\CG$).
Note that $t \circ g(t) = \psi_0 \circ z(t) = \psi_0 \circ s \circ k(t) = \psi_0(z)$. Since $\CG$ is \'etale and target points of $g(t)$ is fixed, $g(t)$ is a constant arrow in $G_1$.
Since $\phi_0 \circ x(t) = s \circ g(t) = \phi_0 (x)$ and $\phi_0$ is an immersion map, $x(t) \equiv x$. 

We conclude that, if $\tilde\iota_\ast({\bf{v}})=0$, then there exist $v_k$ such that
\begin{equation}
{\bf v} = [  (\sigma_1)_\ast (v_k), s_\ast (0)=0,v_g=0, s_\ast (v_k) ],
\end{equation}
it proves that $\iota : N \to M$ is an immersion.
\end{proof}

Lastly, the equivariant immersion $\iota$ is strong by basically the same argument as in Lemma \ref{strong}.

\bibliographystyle{amsalpha}

\end{document}